\documentclass[reqno,11pt]{amsart}

\usepackage{a4wide}
\usepackage[T1]{fontenc}
\usepackage{ae,aecompl}
\usepackage[english]{babel}
\usepackage[latin1]{inputenc}
\usepackage{enumerate}
\usepackage{latexsym}
\usepackage{amsmath,amsthm,amsfonts,amssymb}
\usepackage{xr}
\usepackage{graphicx}
\usepackage{stmaryrd}
\usepackage{mathtools}
\usepackage{braket}
\usepackage{pgfplots}
\usepackage{textcomp}
\usepackage{bbm}

\newtheorem{satz}{Theorem}[section]
\newtheorem{cor}[satz]{Corollary}

\newtheorem{prop}[satz]{Proposition}
\newtheorem{defi}[satz]{Definition}
\newtheorem{bem}[satz]{Remark}
\newtheorem{bsp}[satz]{Example}
\newtheorem{ass}[satz]{Assumption}

\numberwithin{equation}{section}

\newcommand{\sgn}{\operatorname{sgn}}
\newcommand{\erf}{\operatorname{erf}}
\renewcommand{\Re}{\operatorname{Re}}
\renewcommand{\Im}{\operatorname{Im}}

\newcommand{\Arg}{\operatorname{Arg}}
\newcommand{\AC}{\operatorname{AC}}

\title[Schrödinger evolution of superoscillations and supershifts]{\bf A unified approach to Schrödinger evolution of superoscillations and supershifts}

\author[Yakir Aharonov]{Yakir Aharonov}
\address{(YA) Schmid College of Science and Technology, Chapman University, Orange 92866, CA, US}
 \email{aharonov@chapman.edu}

\author[Jussi Behrndt]{Jussi Behrndt}
\address{(JB) Institut für Angewandte Mathematik, Technische Universität Graz, Steyrergasse 30, 8010 Graz, Austria}
\email{behrndt@tugraz.at}

\author[Fabrizio Colombo]{Fabrizio Colombo}
\address{(FC) Politecnico di Milano, Dipartimento di Matematica, Via E. Bonardi, 9, 20133 Milano, Italy}
\email{fabrizio.colombo@polimi.it}

\author[Peter Schlosser]{Peter Schlosser}
\address{(PS) Institut für Angewandte Mathematik, Technische Universität Graz, Steyrergasse 30, 8010 Graz, Austria}
\email{schlosser@tugraz.at}

\begin{document}

\begin{abstract}
Superoscillating functions and supershifts appear naturally in weak measurements in physics. Their evolution as
initial conditions in the time dependent Schrödinger equation is an important and challenging problem in quantum mechanics and mathematical analysis.
The concept that encodes the persistence of superoscillations during the evolution is the (more general)
supershift property of the solution.
In this paper we give a unified approach to determine the supershift property for the solution of the time dependent Schrödinger equation.
The main advantage and novelty of our results is that they only require suitable estimates and regularity assumptions on the Green's function, but not its explicit form.
With this efficient general technique we are able to treat various  potentials.
\end{abstract}
\maketitle
\medskip

\par\noindent AMS Classification: 81Q05, 35A08, 32A10.
\par\noindent \textit{Keywords}: Superoscillating function, supershift property, Green's function, Schrödinger equation, unified approach.

\maketitle

\section{Introduction}

Superoscillations are band limited functions $F$ that oscillate faster than their fastest Fourier component; they appear
in connection with weak measurements in quantum mechanics \cite{aav,abook,av,b5} and as initial conditions in the time dependent Schrödinger equation
\begin{equation}\label{Eq_Time_Dependent_Schroedinger}
\begin{split}
i\frac{\partial}{\partial t}\Psi(t,x)&=\Big(-\frac{\partial^2}{\partial x^2}+V(t,x)\Big)\Psi(t,x), \\
\Psi(0,x)&=F(x),
\end{split}
\end{equation}
or in optics, signal processing, and other fields of physics and engineering as, e.g., antenna theory \cite{MBANTENNA,TDFG}.
The theory of superoscillations and their applications have grown enormously in the last decades and without claiming completeness we mention the contributions
\cite{berry2},\cite{berry}-\cite{b4}
and
\cite{Talbot,kempf1,kempf2,leeferreira2,lindberg}.
The standard example of a sequence of superoscillating functions is
\begin{equation}\label{Eq_Fn}
F_n(x;k)=\sum_{l=0}^n C_l(n;k)e^{ik_l(n)x},
\end{equation}
where $k>1$ and the coefficients $C_l(n;k)$, $k_l(n)$, for $n\in\mathbb{N}_0$, $l=0,\dots,n$, are given by
\begin{equation*}
C_l(n;k)={n\choose l}\bigg(\frac{1+k}{2}\bigg)^{n-l}\bigg(\frac{1-k}{2}\bigg)^l\quad\text{and}\quad k_l(n)=1-\frac{2l}{n},
\end{equation*}
using the binomial coefficient ${n\choose l}$. If we fix $x\in\mathbb{R}$ and let $n$ go to infinity, we obtain the limit
\begin{equation}\label{Eq_Fn_convergence}
\lim_{n\rightarrow\infty}F_n(x;k)=e^{ikx}.
\end{equation}
Observe that the frequencies $k_l(n)$ in \eqref{Eq_Fn} are in modulus bounded by $1$, but the frequency $k$ in the limit function in \eqref{Eq_Fn_convergence}
can be arbitrary large -
this (somewhat unexpected) behaviour gives rise to the notion {\it superoscillations}.
Inspired by the above example it has been shown that also other coefficients lead to the same phenomenon and the theory of superoscillations was extended
to a larger class of functions in \cite{newmeth,acsst6,acsst5} and to functions of several variables in \cite{JFAA}.

The quantum mechanical evolution problem of superoscillations investigates the behaviour of the solution of the time dependent Schrödinger equation
\eqref{Eq_Time_Dependent_Schroedinger}
with superoscillatory initial data. It is of particular importance to understand if a frequency shift of the initial conditions at time $t=0$, as in \eqref{Eq_Fn_convergence}, survives the time evolution and leads to a similar shift for the solutions at later times $t>0$.
The first Schrödinger evolution problem of superoscillations that has been studied was the free particle in \cite{acsst3}, where the solution gives a superoscillatory function in several variables. Later on, in the analysis of evolution problems with nonconstant potentials like the quantum harmonic oscillator \cite{uno,harmonic,YGERFn}, the electric field \cite{ACSST17}, or the uniform magnetic field \cite{J2}, it turned out that the solution of the Schrödinger equation with superoscillatory initial datum does not formally belong to the class of superoscillatory functions, although a certain frequency shift still
appears. These observations stimulated the notion of {\it supershift}, see, e.g., \cite{ABCS20,KG,J2,genHYP,Talbot}.
Very roughly speaking, it is known that the supershift property of the initial datum is stable under the time evolution of the Schrödinger equation for the above mentioned explicit cases. The analysis was based on sophisticated tools involving spaces of holomorphic functions with growth conditions, infinite order differential operators, but required for each of the potentials the explicit form of the
Green's function. For more details on superoscillations and supershifts we also refer the reader to
\cite{ABCS19,acsst4,acsst1,acsst3,KG,Aoki2,AOKI,Jussi}  and the introductory papers \cite{QS1,QS3,QS2, Be19, kempfQS}.

A general approach to study the evolution of superoscillations and supershifts, which only relies on qualitative properties of the Green's function and avoids its explicit form, does
not exist so far. It is the main objective of the
present paper to fill this gap; in fact, we shall provide a unified method,
where we just assume regularity and growth conditions on the Green's function.
The starting point in this paper will be the (formal) representation
\begin{equation}\label{Eq_Wave_function_integral}
\Psi(t,x)=\int_\mathbb{R}G(t,x,y)F(y)dy
\end{equation}
of the solution of the Schrödinger equation \eqref{Eq_Time_Dependent_Schroedinger} via the corresponding Green's function. The class of Green's functions which fit into our general setting is specified in Assumption~\ref{ass_Greensfunction}. In Section~\ref{sec_Fresnel_integrals} we develop the theory of Fresnel integrals, which will then be used in Section~\ref{GRENFREN} to give a rigorous meaning to the integral \eqref{Eq_Wave_function_integral}. More precisely, for a Green's function satisfying
Assumption~\ref{ass_Greensfunction} and an exponentially bounded initial condition
$F$ we show in Theorem~\ref{satz_Greensfunction} that \eqref{Eq_Wave_function_integral} can be viewed as
\begin{equation}\label{Eq_Psiintro}
\Psi(t,x)=\lim\limits_{\varepsilon\rightarrow 0^+}\int_\mathbb{R}e^{-\varepsilon y^2}G(t,x,y)F(y)dy=e^{i\alpha}\int_\mathbb{R}G(t,x,ye^{i\alpha})F(ye^{i\alpha})dy
\end{equation}
for some $\alpha\in(0,\frac{\pi}{2})$,
and in certain situations under slightly stronger assumption one even has
\begin{equation*}
\Psi(t,x)=\lim\limits_{R_1,R_2\rightarrow\infty}\int_{-R_1}^{R_2}G(t,x,y)F(y)dy;
\end{equation*}
cf. Remark~\ref{bembem}.
Moreover, we prove in Theorem~\ref{satz_Convergence_wavefunction} that the solution $\Psi$ depends continuously on the initial datum $F$. This result will be one of the main reasons why the supershift property is stable for $t>0$. In Section~\ref{CONTSUPS} we first recall the supershift property in a slightly more general form
in Definition~\ref{defi_Supershift},
and discuss its connection to the concept of superoscillations.
Afterwards we prove in Theorem~\ref{satz_Supershift_property} the time persistence of the supershift property for potentials $V$, where the corresponding Green's function satisfies  Assumption~\ref{ass_Greensfunction}. In the final Section~\ref{EXAMP} we apply our main results
to the Schrödinger equation with explicitly given potentials. Here we consider the free particle as a warm up, and
the time dependent uniform electric field, the time dependent harmonic oscillator, and the Pöschl-Teller potential as more sophisticated examples.
In each case  we verify that the corresponding Green's functions satisfies Assumption~\ref{ass_Greensfunction}, and hence fits into our general setting.
Therefore, for each of these potentials we conclude the time persistence of the supershift property of the initial condition.

\section{Fresnel integrals}\label{sec_Fresnel_integrals}

In this section we provide some preliminary material on the so-called Fresnel integral, which will be used in our main results in the next sections. Roughly speaking,
the
main purpose is to make sense of  integrals of the form
\begin{equation}\label{Eq_Non_integrable}
\int_\mathbb{R}e^{iy^2}f(y)dy,
\end{equation}
in particular, in the case where the function $f$ itself is not integrable.

\begin{prop}\label{prop_Fresnel_integral}
Let $f:\Omega\rightarrow\mathbb{C}$ be holomorphic on an open set $\Omega\subseteq\mathbb{C}$, which contains the sector
\begin{equation}\label{Eq_Sector}
S^+_\alpha\coloneqq\Set{z\in\mathbb{C} | 0\leq\Arg(z)\leq\alpha}
\end{equation}
for some $\alpha\in(0,\frac{\pi}{2})$. Then the following assertions hold.
\begin{itemize}
\item [{\rm (i)}]
If $f$ satisfies the estimate
\begin{equation}\label{Eq_Fresnel_estimate}
|f(z)|\leq Ae^{B|z|},\qquad z\in S^+_\alpha,
\end{equation}
for some $A,B\geq 0$, then for every $y_0\in\mathbb{R}$
\begin{equation}\label{Eq_Fresnel_integral}
\lim\limits_{\varepsilon\rightarrow 0^+}\int_0^\infty e^{-\varepsilon(y-y_0)^2}e^{iy^2}f(y)dy=e^{i\alpha}\int_0^\infty e^{i(ye^{i\alpha})^2}f(ye^{i\alpha})dy,
\end{equation}
where both integrands are absolutely integrable.
\item [{\rm (ii)}] If $f$ satisfies the estimate
\begin{equation}\label{Eq_Fresnel_estimate_Im}
|f(z)|\leq Ae^{B\Im(z)},\qquad z\in S_\alpha^+,
\end{equation}
for some $A,B\geq 0$, then
\begin{equation}\label{Eq_Fresnel_integral_Im22}
\lim\limits_{R\rightarrow\infty}\int_0^Re^{iy^2}f(y)dy=e^{i\alpha}\int_0^\infty e^{i(ye^{i\alpha})^2}f(ye^{i\alpha})dy,
\end{equation}
where the integrand on the right hand side is absolutely integrable, and also the integrand
 on the left hand side is absolutely integrable for every $R>0$.
\end{itemize}
\end{prop}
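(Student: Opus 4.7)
Both parts rest on Cauchy's theorem applied to the closed sector $\{re^{i\theta} : 0 \leq r \leq R,\ 0 \leq \theta \leq \alpha\} \subseteq S_\alpha^+ \subseteq \Omega$, whose boundary is the real segment $[0,R]$, the circular arc $\{Re^{i\theta} : 0 \leq \theta \leq \alpha\}$, and the returning ray from $Re^{i\alpha}$ to $0$. Once the arc contribution is shown to vanish as $R\to\infty$, the contour rotation identifies the real-line integral with $e^{i\alpha}$ times the integral along the rotated ray. The main qualitative input is that on the rotated ray $z = ye^{i\alpha}$ one has $|e^{iz^2}| = e^{-y^2\sin 2\alpha}$ with $\sin 2\alpha>0$ for $\alpha\in(0,\tfrac{\pi}{2})$, producing the Gaussian decay that beats the at-most-exponential growth of $f$.

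For part (ii), absolute integrability on the rotated ray is immediate since $|f(ye^{i\alpha})| \leq A e^{By\sin\alpha}$ by \eqref{Eq_Fresnel_estimate_Im}, and $e^{-y^2\sin 2\alpha}$ dominates. On the circular arc one combines $|e^{iz^2}f(z)| \leq A e^{-R^2\sin 2\theta + BR\sin\theta}$ with the factorisation
\[
-R^2\sin 2\theta + BR\sin\theta = -R\sin\theta\,(2R\cos\theta - B) \leq -R\sin\theta\,(2R\cos\alpha - B);
\]
inserting Jordan's inequality $\sin\theta \geq 2\theta/\pi$ and integrating in $\theta$ gives an arc contribution of order $1/R$, whence \eqref{Eq_Fresnel_integral_Im22}.

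For part (i), the same scheme is applied to $g_\varepsilon(z) \coloneqq e^{-\varepsilon(z-y_0)^2} e^{iz^2} f(z)$ for each fixed small $\varepsilon > 0$; absolute integrability on $[0,\infty)$ follows from $|g_\varepsilon(y)| \leq A e^{-\varepsilon(y-y_0)^2 + By}$. The main obstacle is the arc estimate, because on $z=Re^{i\theta}$ the dominant exponent becomes $-R^2(\sin 2\theta + \varepsilon\cos 2\theta) + O(R)$, and $\cos 2\theta$ is negative for $\theta > \pi/4$, so the Gaussian regulator itself \emph{grows} over part of the sector. I would handle this by fixing $\varepsilon_0 > 0$ small enough that
\[
\sin 2\theta + \varepsilon\cos 2\theta \geq \tfrac{1}{2}\sin 2\alpha > 0 \qquad\text{uniformly for }\theta\in[0,\alpha],\ \varepsilon\in(0,\varepsilon_0];
\]
this is possible since $\sin 2\alpha > 0$. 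For such $\varepsilon$ the arc decays like $e^{-cR^2}$ and Cauchy's theorem yields
\[
\int_0^\infty g_\varepsilon(y)\,dy = e^{i\alpha}\int_0^\infty g_\varepsilon(ye^{i\alpha})\,dy.
\]

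To finish, I would pass to the limit $\varepsilon\to 0^+$ on the rotated side via dominated convergence. Pointwise $e^{-\varepsilon(ye^{i\alpha}-y_0)^2} \to 1$, and for $\varepsilon\in(0,\varepsilon_0]$ the integrand is bounded by $A\exp(-\tfrac12 y^2\sin 2\alpha + Cy)$ with $C$ independent of $\varepsilon$, since the potentially positive part of $\Re((ye^{i\alpha}-y_0)^2) = y^2\cos 2\alpha - 2y_0 y\cos\alpha + y_0^2$ is again absorbed into half of the Fresnel decay by the smallness of $\varepsilon_0$. The resulting identity is exactly \eqref{Eq_Fresnel_integral}, and the absolute integrability claims on both sides have been established along the way.
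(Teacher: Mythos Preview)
Your overall scheme---Cauchy's theorem on a sector, vanishing of the connecting piece as $R\to\infty$, then dominated convergence on the rotated ray---is exactly the paper's, and your treatment of part (ii) and of the dominated-convergence step in (i) is correct. But the arc estimate in (i) contains a false claim: you assert that for $\varepsilon_0$ small enough,
\[
\sin 2\theta + \varepsilon\cos 2\theta \;\geq\; \tfrac{1}{2}\sin 2\alpha
\qquad\text{uniformly for }\theta\in[0,\alpha],\ \varepsilon\in(0,\varepsilon_0].
\]
At $\theta=0$ the left side equals $\varepsilon$, which is \emph{smaller} than $\tfrac{1}{2}\sin 2\alpha$ precisely when $\varepsilon$ is small, so no such $\varepsilon_0$ exists. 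The repair is easy: for each fixed $\varepsilon\in(0,\sin 2\alpha)$ the function $\theta\mapsto\sin 2\theta+\varepsilon\cos 2\theta$ attains its minimum on $[0,\alpha]$ at an endpoint, and that minimum is $\varepsilon$. Hence the arc integral is bounded by $C\alpha R\,e^{-\varepsilon R^2+C'R}\to 0$ as $R\to\infty$ for each fixed $\varepsilon>0$, which is all you need before passing to $\varepsilon\to 0^+$ on the rotated side.

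The paper sidesteps this delicacy by using a \emph{triangular} contour rather than your pie slice: instead of the circular arc it takes the vertical segment $\gamma_2=\{R+iy:0\leq y\leq R\tan\alpha\}$. There one has $\Re\bigl((i-\varepsilon)(R+iy)^2\bigr)=-2Ry-\varepsilon R^2+\varepsilon y^2$, and since $y\leq R\tan\alpha$, the term $\varepsilon y^2-2Ry\leq 0$ once $\varepsilon\leq 2/\tan\alpha$; the $\gamma_2$-integral is then bounded by $kR\,e^{-\varepsilon R^2+cR}$ with no angular variable to track. Both contours ultimately work, but the triangle makes the $\varepsilon$-dependence transparent.
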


\begin{proof}
(i) Since the proof for arbitrary $y_0\in\mathbb{R}$ follows the same steps, we will restrict ourselves to $y_0=0$. We use the abbreviation $k=\tan(\alpha)>0$. For $R>0$ we consider the integration path

\begin{minipage}{0.5\textwidth}
\begin{align*}
\gamma_1&:=\Set{y \ |\ 0\leq y\leq R}, \\
\gamma_2&:=\Set{R+iy \ |\ 0\leq y\leq kR}, \\
\gamma_3&:=\Set{ye^{i\alpha} \ |\ 0\leq y\leq R\sqrt{1+k^2}}.
\end{align*}
\end{minipage}
\hspace{1cm}
\begin{minipage}{0.39\textwidth}
\begin{tikzpicture}
\draw[thick,->] (0,0)--(5,0);
\draw[thick,->] (0,0)--(0,3);
\draw[ultra thick,->] (0,0)--(2.5,0);
\draw[ultra thick,-] (0,0)--(4,0);
\draw[ultra thick,->] (4,0)--(4,1.6);
\draw[ultra thick,-] (4,0)--(4,3);
\draw[ultra thick,->] (0,0)--(2,1.5);
\draw[ultra thick,-] (0,0)--(4,3);
\draw[thick,-] (1,0)arc(0:37:1);
\filldraw[black] (2.5,0) node[anchor=north east] {$\gamma_1$};
\filldraw[black] (4,1.6) node[anchor=north west] {$\gamma_2$};
\filldraw[black] (2,1.5) node[anchor=south east] {$\gamma_3$};
\filldraw[black] (5,0) node[anchor=south] {\tiny{$\Re(z)$}};
\filldraw[black] (0,3) node[anchor=west] {\tiny{$\Im(z)$}};
\filldraw[black] (0.4,0.2) node[anchor=west] {\tiny{$\alpha$}};
\filldraw[black] (4,0) node[anchor=north] {\tiny{$R$}};
\end{tikzpicture}
\end{minipage}
Since $f$ is holomorphic we have
\begin{equation}\label{Eq_Fresnel_integral1}
\int_{\gamma_1}e^{(i-\varepsilon)z^2}f(z)dz=-\int_{\gamma_2}e^{(i-\varepsilon)z^2}f(z)dz+\int_{\gamma_3}e^{(i-\varepsilon)z^2}f(z)dz.
\end{equation}
With the exponential bound \eqref{Eq_Fresnel_estimate}, the integral along the curve $\gamma_2$ can be estimated as
\begin{equation*}
\Big|\int_{\gamma_2}e^{(i-\varepsilon)z^2}f(z)dz\Big|\leq A\int_0^{kR}e^{\varepsilon y^2-2Ry-\varepsilon R^2}e^{B|R+iy|}dy\leq Ae^{-\varepsilon R^2+\sqrt{1+k^2}BR}\int_0^{kR}e^{\varepsilon y^2-2Ry}dy.
\end{equation*}
Choosing $\varepsilon\leq\frac{2}{k}$, the last integral can be estimated by
\begin{equation*}
\int_0^{kR}e^{\varepsilon y^2-2Ry}dy=kR\int_0^1e^{kR^2(\varepsilon ky-2)y}dy\leq kR,
\end{equation*}
and hence we conclude the convergence
\begin{equation*}
\lim\limits_{R\rightarrow\infty}\int_{\gamma_2}e^{(i-\varepsilon)z^2}f(z)dz=0.
\end{equation*}
Consequently, in the limit $R\rightarrow\infty$ \eqref{Eq_Fresnel_integral1} becomes
\begin{equation*}
\int_0^\infty e^{(i-\varepsilon)y^2}f(y)dy=e^{i\alpha}\int_0^\infty e^{(i-\varepsilon)(ye^{i\alpha})^2}f(ye^{i\alpha})dy,
\end{equation*}
where the integrand on the left hand side is absolutely integrable due to the factor $e^{-\varepsilon y^2}$, but also the right hand side is absolutely
integrable due to the estimates
\begin{align*}
\big|e^{(i-\varepsilon)(ye^{i\alpha})^2}f(ye^{i\alpha})\big|&\leq Ae^{-(\sin(2\alpha)+\varepsilon\cos(2\alpha))y^2+By}\\
&=Ae^{-\sin(2\alpha)\big(1+\varepsilon\frac{\cos^2(\alpha)-\sin^2(\alpha)}{2\sin(\alpha)\cos(\alpha)}\big)y^2+By} \\
&\leq Ae^{-\sin(2\alpha)(1-\frac{\varepsilon k}{2})y^2+By}\\
&\leq Ae^{-\sin(2\alpha)\frac{y^2}{2}+By}
\end{align*}
for all $\varepsilon<\frac{1}{k}$. Since this upper bound does not depend on $\varepsilon$ we can apply the dominated convergence theorem and obtain
\begin{equation*}
\lim\limits_{\varepsilon\rightarrow 0^+}\int_0^\infty e^{(i-\varepsilon)y^2}f(y)dy=e^{i\alpha}\int_0^\infty e^{i(ye^{i\alpha})^2}f(ye^{i\alpha})dy,
\end{equation*}
which is \eqref{Eq_Fresnel_integral}.

(ii)
In order to prove \eqref{Eq_Fresnel_integral_Im22} we use the same integration path as in (i) and conclude from \eqref{Eq_Fresnel_estimate_Im} for every $R>\frac{B}{2}$ the estimate
\begin{equation*}
\Big|\int_{\gamma_2}e^{iz^2}f(z)dz\Big|\leq\int_0^{kR}\Big|e^{i(R+iy)^2}f(R+iy)\Big|dy\leq A\int_0^\infty e^{-(2R-B)y}dy=\frac{A}{2R-B}.
\end{equation*}
Hence, we also conclude in this case
\begin{equation*}
\lim\limits_{R\rightarrow\infty}\int_{\gamma_2}e^{iz^2}f(z)dz=0.
\end{equation*}
Moroever, from \eqref{Eq_Fresnel_integral1} with $\varepsilon=0$ we obtain
\begin{equation*}
\lim\limits_{R\rightarrow\infty}\int_0^Re^{iy^2}f(y)dy=e^{i\alpha}\int_0^\infty e^{i(ye^{i\alpha})^2}f(ye^{i\alpha})dy,
\end{equation*}
where the integrand on the right hand side is absolutely integrable due to the estimate
\begin{equation*}
\big|e^{i(ye^{i\alpha})^2}f(ye^{i\alpha})\big|\leq Ae^{-\sin(2\alpha)y^2+B\sin(\alpha)y}.
\end{equation*}
\end{proof}

The next corollary is a slight extension of Proposition~\ref{prop_Fresnel_integral} which includes some additional constants $a>0$ and $y_1\in\mathbb{R}$.

\begin{cor}\label{cor_Fresnel_integral}
Let $a>0$, $y_1\in\mathbb{R}$ and let $f:\Omega\rightarrow\mathbb{C}$ be holomorphic on an open set $\Omega\subseteq\mathbb{C}$, which contains the sector $S_\alpha^+$ from \eqref{Eq_Sector} for some $\alpha\in(0,\frac{\pi}{2})$. Then the following assertions hold.

\begin{itemize}
\item [{\rm (i)}]
If $f$ satisfies the estimate
\begin{equation}\label{Eq_Fresnel_estimatecc}
|f(z)|\leq Ae^{B|z|},\qquad z\in S_\alpha^+,
\end{equation}
for some $A,B\geq 0$, then for every $y_0\in\mathbb{R}$
\begin{equation}\label{Eq_Fresnel_integralcc}
\lim\limits_{\varepsilon\rightarrow 0^+}\int_0^\infty e^{-\varepsilon(y-y_0)^2}e^{ia(y-y_1)^2}f(y)dy=e^{i\alpha}\int_0^\infty e^{ia(ye^{i\alpha}-y_1)^2}f(ye^{i\alpha})dy,
\end{equation}
where both integrands are absolutely integrable.
\item [{\rm (ii)}] If $f$ satisfies the estimate
\begin{equation}\label{Eq_Fresnel_estimate_Imcc}
|f(z)|\leq Ae^{B\Im(z)},\qquad z\in S_\alpha^+,
\end{equation}
for some $A,B\geq 0$, then
\begin{equation}\label{Eq_Fresnel_integral_Im22cc}
\lim\limits_{R\rightarrow\infty}\int_0^Re^{ia(y-y_1)^2}f(y)dy=e^{i\alpha}\int_0^\infty e^{ia(ye^{i\alpha}-y_1)^2}f(ye^{i\alpha})dy,
\end{equation}
where the integrand on the right hand side is absolutely integrable, and also the integrand on the left hand side is absolutely integrable for every $R>0$.
\end{itemize}
\end{cor}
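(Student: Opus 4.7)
The plan is to reduce Corollary~\ref{cor_Fresnel_integral} to Proposition~\ref{prop_Fresnel_integral} by a linear change of variable $w=\sqrt{a}\,z$ that rescales $ia(z-y_1)^2$ into a quadratic form centered at the origin. Writing $ia(z-y_1)^2 = i(\sqrt{a}z)^2 - 2i\sqrt{a}y_1(\sqrt{a}z) + iay_1^2$, I would absorb the linear phase and the constant into the function by setting
\[
\tilde f(w) := e^{iay_1^2}\, e^{-2i\sqrt{a}\,y_1 w}\, f(w/\sqrt{a}),
\]
so that $e^{ia(z-y_1)^2}f(z) = e^{iw^2}\tilde f(w)$ along $w=\sqrt{a}z$. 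Since $\sqrt{a}>0$, the map $w\mapsto w/\sqrt{a}$ preserves the sector $S_\alpha^+$, so $\tilde f$ is holomorphic on an open set containing $S_\alpha^+$.

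Next I would check that $\tilde f$ inherits the exponential bounds required to apply Proposition~\ref{prop_Fresnel_integral}. In case (i), $|e^{-2i\sqrt{a}y_1 w}|=e^{2\sqrt{a}y_1\Im(w)}\leq e^{2\sqrt{a}|y_1|\,|w|}$ combined with \eqref{Eq_Fresnel_estimatecc} gives $|\tilde f(w)|\leq A\,e^{(2\sqrt{a}|y_1|+B/\sqrt{a})|w|}$ on $S_\alpha^+$, which is of the form \eqref{Eq_Fresnel_estimate}. In case (ii), the same computation yields $|\tilde f(w)|\leq A\,e^{(2\sqrt{a}y_1+B/\sqrt{a})\Im(w)}$; if the coefficient is nonnegative this is directly of the form \eqref{Eq_Fresnel_estimate_Im}, and otherwise $\Im(w)\geq 0$ on $S_\alpha^+$ forces $|\tilde f(w)|\leq A$, which is \eqref{Eq_Fresnel_estimate_Im} with $B=0$.

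Now I would execute the substitution in the integrals. With $\varepsilon'=\varepsilon/a$ and $w_0=\sqrt{a}\,y_0$, the left-hand side of \eqref{Eq_Fresnel_integralcc} becomes
\[
\int_0^\infty e^{-\varepsilon(z-y_0)^2}e^{ia(z-y_1)^2}f(z)\,dz
= \frac{1}{\sqrt{a}}\int_0^\infty e^{-\varepsilon'(w-w_0)^2}e^{iw^2}\tilde f(w)\,dw,
\]
and $\varepsilon\to 0^+$ is equivalent to $\varepsilon'\to 0^+$. Proposition~\ref{prop_Fresnel_integral}(i), applied to $\tilde f$, computes this limit as $\frac{e^{i\alpha}}{\sqrt{a}}\int_0^\infty e^{i(we^{i\alpha})^2}\tilde f(we^{i\alpha})\,dw$, and substituting back $w=\sqrt{a}\,z$ reconstructs $e^{ia(ze^{i\alpha}-y_1)^2}f(ze^{i\alpha})$ in the integrand, yielding \eqref{Eq_Fresnel_integralcc}. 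The statements about absolute integrability follow from those in Proposition~\ref{prop_Fresnel_integral} under the substitution. The argument for part (ii) is identical, using that $\int_0^R\leftrightarrow\int_0^{\sqrt{a}R}$ under the scaling and that $\sqrt{a}R\to\infty$ iff $R\to\infty$, applying Proposition~\ref{prop_Fresnel_integral}(ii) in place of (i).

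I do not expect any genuine obstacle here; the entire content is careful bookkeeping of a scaling and a phase shift, together with the check that the modified growth constants still fit the hypotheses of Proposition~\ref{prop_Fresnel_integral}. The only mildly subtle point is the handling of part (ii) when the shifted coefficient $2\sqrt{a}y_1+B/\sqrt{a}$ becomes negative, which is resolved by the boundedness observation above.
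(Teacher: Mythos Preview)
Your proposal is correct and follows essentially the same route as the paper: define the auxiliary function $g(w)=e^{iay_1^2}e^{-2i\sqrt{a}\,y_1 w}f(w/\sqrt{a})$, verify the appropriate exponential bound on $S_\alpha^+$, apply Proposition~\ref{prop_Fresnel_integral}, and undo the substitution. The only cosmetic difference is that in part~(ii) the paper bounds $e^{2\sqrt{a}y_1\Im(w)}\leq e^{2\sqrt{a}|y_1|\Im(w)}$ directly (using $\Im(w)\geq 0$ on $S_\alpha^+$) to obtain a nonnegative coefficient, whereas you split into cases according to the sign of $2\sqrt{a}y_1+B/\sqrt{a}$; both are fine.
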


\begin{proof}
(i) Substituting $x=y\sqrt{a}$, we can write
\begin{equation*}
\int_0^\infty e^{-\varepsilon(y-y_0)^2}e^{ia(y-y_1)^2}f(y)dy=\frac{1}{\sqrt{a}}\int_0^\infty e^{-\frac{\varepsilon}{a}(x-\sqrt{a}y_0)^2}e^{i(x-\sqrt{a}y_1)^2}f\Big(\frac{x}{\sqrt{a}}\Big)dx.
\end{equation*}
If we define $g(z)\coloneqq e^{i(-2z\sqrt{a}y_1+ay_1^2)}f(\frac{z}{\sqrt{a}})$, $z\in\Omega$, we can write this integral as
\begin{equation}\label{Eq_Fresnel_corollary1}
\int_0^\infty e^{-\varepsilon(y-y_0)^2}e^{ia(y-y_1)^2}f(y)dy=\frac{1}{\sqrt{a}}\int_0^\infty e^{-\frac{\varepsilon}{a}(x-\sqrt{a}y_0)^2}e^{ix^2}g(x)dx.
\end{equation}
Since, by \eqref{Eq_Fresnel_estimatecc}, $g$ satisfies the estimate
\begin{equation*}
|g(z)|=e^{2\Im(z)\sqrt{a}y_1}\Big|f\Big(\frac{z}{\sqrt{a}}\Big)\Big|\leq Ae^{\big(2\sqrt{a}|y_1|+\frac{B}{\sqrt{a}}\big)|z|},\qquad z\in S_\alpha^+,
\end{equation*}
we know by Proposition~\ref{prop_Fresnel_integral} (i), that
\begin{align*}
\lim\limits_{\varepsilon\rightarrow 0^+}\int_0^\infty e^{-\varepsilon(y-y_0)^2}e^{ia(y-y_1)^2}f(y)dy&=\frac{1}{\sqrt{a}}\lim\limits_{\varepsilon\rightarrow 0^+}\int_0^\infty e^{-\frac{\varepsilon}{a}(x-\sqrt{a}y_0)^2}e^{ix^2}g(x)dx \\
&=\frac{1}{\sqrt{a}}e^{i\alpha}\int_0^\infty e^{i(xe^{i\alpha})^2}g(xe^{i\alpha})dx \\
&=e^{i\alpha}\int_0^\infty e^{ia(ye^{i\alpha}-y_1)^2}f(ye^{i\alpha})dy.
\end{align*}

(ii) In the same way as in \eqref{Eq_Fresnel_corollary1} for $\varepsilon=0$, we can write
\begin{equation*}
\int_0^Re^{ia(y-y_1)^2}f(y)dy=\frac{1}{\sqrt{a}}\int_{0}^{R\sqrt{a}}e^{ix^2}g(x)dx.
\end{equation*}
Since, by \eqref{Eq_Fresnel_estimate_Imcc}, $g$ satisfies the estimate
\begin{equation*}
|g(z)|=e^{2\Im(z)\sqrt{a}y_1}\Big|f\Big(\frac{z}{\sqrt{a}}\Big)\Big|\leq Ae^{\bigl(2\sqrt{a}|y_1|+\frac{B}{\sqrt{a}}\bigr)\Im(z)},\qquad z\in S_\alpha^+,
\end{equation*}
we know by Proposition~\ref{prop_Fresnel_integral}~(ii), that
\begin{align*}
\lim\limits_{R\rightarrow\infty}\int_0^Re^{ia(y-y_1)^2}f(y)dy&=\frac{1}{\sqrt{a}}\lim\limits_{R\rightarrow\infty}\int_0^{R\sqrt{a}}e^{ix^2}g(x)dx \\
&=\frac{1}{\sqrt{a}}e^{i\alpha}\int_0^\infty e^{i(xe^{i\alpha})^2}g(xe^{i\alpha})dx \\
&=e^{i\alpha}\int_0^\infty e^{ia(ye^{i\alpha}-y_1)^2}f(ye^{i\alpha})dy.
\end{align*}
\end{proof}

The Fresnel integral technique in Proposition~\ref{prop_Fresnel_integral} and Corollary~\ref{cor_Fresnel_integral} can also be applied
on the negative semi axis. This leads to the following corollary.

\begin{cor}\label{cor_Fresnel_integral_R}
Let $a>0$, $y_1\in\mathbb{R}$ and let $f:\Omega\rightarrow\mathbb{C}$ be holomorphic on an open set $\Omega\subseteq\mathbb{C}$, which contains the double sector
\begin{equation}\label{Eq_Doublesector-}
S_\alpha\coloneqq\Set{z\in\mathbb{C} | \Arg(z)\in[0,\alpha]\cup[\pi,\pi+\alpha]}
\end{equation}
for some $\alpha\in(0,\frac{\pi}{2})$. Then the following assertions hold.
\begin{itemize}
\item [{\rm (i)}]
If $f$ satisfies the estimate
\begin{equation}\label{Eq_Fresnel_estimate-}
|f(z)|\leq Ae^{B|z|},\qquad z\in S_\alpha,
\end{equation}
for some $A,B\geq 0$, then for every $y_0\in\mathbb{R}$
\begin{equation}\label{Eq_Fresnel_integral_R-}
\lim\limits_{\varepsilon\rightarrow 0^+}\int_\mathbb{R}e^{-\varepsilon(y-y_0)^2}e^{ia(y-y_1)^2}f(y)dy=e^{i\alpha}\int_\mathbb{R}e^{ia(ye^{i\alpha}-y_1)^2}f(ye^{i\alpha})dy,
\end{equation}
where both integrands are absolutely integrable.
\item [{\rm (ii)}] If $f$ satisfies the estimate
\begin{equation}\label{Eq_Fresnel_estimate_Im-}
|f(z)|\leq Ae^{B|\Im(z)|},\qquad z\in S_\alpha,
\end{equation}
for some $A,B\geq 0$, then
\begin{equation}\label{Eq_Fresnel_integral_Im-}
\lim\limits_{R_1,R_2\rightarrow\infty}\int_{-R_1}^{R_2}e^{ia(y-y_1)^2}f(y)dy=e^{i\alpha}\int_\mathbb{R}e^{ia(ye^{i\alpha}-y_1)^2}f(ye^{i\alpha})dy,
\end{equation}
where the integrand on the right hand side is absolutely integrable, and also the integrand on the left hand side is absolutely integrable for every $R_1,R_2>0$.
\end{itemize}
\end{cor}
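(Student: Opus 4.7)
The plan is to reduce Corollary~\ref{cor_Fresnel_integral_R} to Corollary~\ref{cor_Fresnel_integral} by splitting the real line as $\mathbb{R}=(-\infty,0]\cup[0,\infty)$. The positive-half-line contribution is immediate: since $S_\alpha\supseteq S_\alpha^+$, the estimates \eqref{Eq_Fresnel_estimate-} and \eqref{Eq_Fresnel_estimate_Im-} restrict on $S_\alpha^+$ to the hypotheses \eqref{Eq_Fresnel_estimatecc} and \eqref{Eq_Fresnel_estimate_Imcc}, respectively, and the corresponding identities for $\int_0^\infty$ follow verbatim from Corollary~\ref{cor_Fresnel_integral}.

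For the negative half-line I would substitute $y=-x$ and introduce the reflected function $\tilde f(z):=f(-z)$. Because the double sector $S_\alpha$ is invariant under $z\mapsto -z$, the function $\tilde f$ is holomorphic on a neighborhood of $S_\alpha^+$. In part~(i) the substitution transforms
\begin{equation*}
\int_{-\infty}^0 e^{-\varepsilon(y-y_0)^2}e^{ia(y-y_1)^2}f(y)\,dy
=\int_0^\infty e^{-\varepsilon(x-(-y_0))^2}e^{ia(x-(-y_1))^2}\tilde f(x)\,dx,
\end{equation*}
so Corollary~\ref{cor_Fresnel_integral}(i) applies with $y_0,y_1$ replaced by $-y_0,-y_1$ and $f$ replaced by $\tilde f$. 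Reverting the substitution on the rotated side produces exactly $e^{i\alpha}\int_{-\infty}^0 e^{ia(ye^{i\alpha}-y_1)^2}f(ye^{i\alpha})\,dy$, including the absolute integrability claim. The analogous argument with $\varepsilon=0$ and a cutoff at $R_1$ handles part~(ii), after which summing the positive- and negative-half contributions gives \eqref{Eq_Fresnel_integral_R-} and \eqref{Eq_Fresnel_integral_Im-}.

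The one point requiring genuine care, and really the reason that \eqref{Eq_Fresnel_estimate-} and \eqref{Eq_Fresnel_estimate_Im-} are stated in terms of $|z|$ and $|\Im(z)|$ on the full double sector, is the transfer of the growth bound to $\tilde f$ on $S_\alpha^+$. In part~(i) this is automatic from $|-z|=|z|$. In part~(ii), for $z\in S_\alpha^+$ one has $\Im(z)\geq 0$, while $-z$ lies in the lower sector $\{\Arg\in[\pi,\pi+\alpha]\}\subseteq S_\alpha$, so
\begin{equation*}
|\tilde f(z)|=|f(-z)|\leq Ae^{B|\Im(-z)|}=Ae^{B|\Im(z)|}=Ae^{B\Im(z)},
\end{equation*}
matching \eqref{Eq_Fresnel_estimate_Imcc} with the same constants; without the absolute value in \eqref{Eq_Fresnel_estimate_Im-} the sign would flip under reflection and the argument would break. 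I do not expect any further analytic obstacle beyond this bookkeeping of signs and shifts.
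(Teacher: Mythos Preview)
Your proposal is correct and follows essentially the same approach as the paper, which merely states that the Fresnel technique of Proposition~\ref{prop_Fresnel_integral} and Corollary~\ref{cor_Fresnel_integral} ``can also be applied on the negative semi axis'' without providing details; your reflection $\tilde f(z)=f(-z)$ together with the symmetry $-S_\alpha=S_\alpha$ is precisely the natural way to carry this out, and your observation about why the absolute value in \eqref{Eq_Fresnel_estimate_Im-} is needed is the only nontrivial point.
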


The initial purpose of the Fresnel integral technique was
to give meaning to the integral \eqref{Eq_Non_integrable}. For holomorphic functions $f$ satisfying the growth condition \eqref{Eq_Fresnel_estimate-},
Corollary~\ref{cor_Fresnel_integral_R}~(i) shows (with the choice $a=1$ and $y_1=0$) that one can
insert the Gaussian $e^{-\varepsilon(y-y_0)^2}$ and view the integral \eqref{Eq_Non_integrable} as the limit
\begin{equation*}
\lim\limits_{\varepsilon\rightarrow 0^+}\int_\mathbb{R}e^{-\varepsilon(y-y_0)^2}e^{iy^2}f(y)dy.
\end{equation*}
Under the stronger assumption \eqref{Eq_Fresnel_estimate_Im-} (which, in particular, implies that $f$ is bounded on the real line) such a
regularisation is not necessary and according to Corollary~\ref{cor_Fresnel_integral_R}~(ii) one can regard the integral \eqref{Eq_Non_integrable} as the limit
\begin{equation*}
\lim\limits_{R_1,R_2\rightarrow \infty}\int_{-R_1}^{R_2}e^{iy^2}f(y)dy.
\end{equation*}
However, under both assumptions \eqref{Eq_Fresnel_estimate-} and \eqref{Eq_Fresnel_estimate_Im-} one has the absolutely convergent representation
\begin{equation*}
e^{i\alpha}\int_\mathbb{R}e^{i(ye^{i\alpha})^2}f(ye^{i\alpha})dy.
\end{equation*}

\section{Green's functions and solutions of the Schr\"{o}dinger equation}\label{GRENFREN}

The main goal of this section is to treat the Cauchy problem \eqref{Eq_Time_Dependent_Schroedinger} for the time dependent Schrödinger equation
and the integral representation \eqref{Eq_Wave_function_integral} of the solution
in a mathematical rigorous framework. In particular, we provide a class of Green's functions and initial conditions
such that \eqref{Eq_Wave_function_integral} can be interpreted as a Fresnel integral.

\medskip

For our purposes it is convenient to view solutions (and their derivatives) in the context of absolutely continuous functions. Recall, that for an interval $I\subseteq\mathbb{R}$, a function $f:I\rightarrow\mathbb{C}$ is said to be \textit{absolutely continuous}, if there exists some $g\in L^1_\text{loc}(I)$, such that
\begin{equation}\label{Eq_Absolute_continuous_integral}
f(y)-f(x)=\int_x^y g(s)ds,\qquad x,y\in I.
\end{equation}
The linear space of absolutely continuous functions on $I$ will be denoted by $\AC(I)$. Observe that $f\in\AC(I)$ is differentiable almost everywhere and its derivative $f'$ coincides with $g$ in \eqref{Eq_Absolute_continuous_integral} almost everywhere. For $T\in(0,\infty]$ we shall work with the space
\begin{equation}\label{Eq_AC12}
\AC_{1,2}((0,T)\times\mathbb{R})\coloneqq\Set{\Psi:(0,T)\times\mathbb{R}\rightarrow\mathbb{C} | \begin{array}{l} \Psi(\,\cdot\,,x)\in\AC((0,T))\text{ for all }x\in\mathbb{R} \\ \Psi(t,\,\cdot\,),\frac{\partial}{\partial x}\Psi(t,\,\cdot\,)\in\AC(\mathbb{R})\text{ for all }t\in(0,T) \end{array}}.
\end{equation}

Let $V:(0,T)\times\mathbb{R}\rightarrow\mathbb{C}$ be some potential and let $F:\mathbb{R}\rightarrow\mathbb{C}$ be some initial condition. We call a function $\Psi\in\AC_{1,2}((0,T)\times\mathbb{R})$ a \textit{solution} of the time dependent Schrödinger equation, if it satisfies
\begin{subequations}\label{Eq_Schroedinger}
\begin{align}
i\frac{\partial}{\partial t}\Psi(t,x)&=\Big(-\frac{\partial^2}{\partial x^2}+V(t,x)\Big)\Psi(t,x), & \text{for a.e. }t\in(0,T),\,x\in\mathbb{R}, \label{Eq_Schroedinger1}\\
\lim\limits_{t\rightarrow 0^+}\Psi(t,x)&=F(x), & x\in\mathbb{R}. \label{Eq_Schroedinger2}
\end{align}
\end{subequations}
The corresponding \textit{Green's function} is a function $G:(0,T)\times\mathbb{R}\times\mathbb{R}\rightarrow\mathbb{C}$ (which depends on the potential $V$, but is independent of the initial condition $F$), such that $\Psi$ admits the representation
\begin{equation}\label{Eq_Psi_integral}
\Psi(t,x)=\int_\mathbb{R}G(t,x,y)F(y)dy,\qquad t\in(0,T),\,x\in\mathbb{R}.
\end{equation}

Next we collect a set of assumptions on the Green's function $G$ which ensure that the wave function \eqref{Eq_Psi_integral} is well defined and a solution of the Cauchy problem \eqref{Eq_Schroedinger}. The precise formulation of this statement, and also the set of allowed initial conditions, is given in Theorem~\ref{satz_Greensfunction}.

\begin{ass}\label{ass_Greensfunction}
Let $T\in(0,\infty]$ and consider a function
\begin{equation}\label{Eq_G_real}
G:(0,T)\times\mathbb{R}\times\mathbb R\rightarrow\mathbb{C}.
\end{equation}
Let $\Omega\subseteq\mathbb{C}$ be an open set, which contains the double sector $S_\alpha$ defined in  \eqref{Eq_Doublesector-} for some $\alpha\in(0,\frac{\pi}{2})$ and suppose that $G$ admits a continuation to a function $G:(0,T)\times\mathbb{R}\times\Omega\rightarrow\mathbb{C}$, such that $z\mapsto G(t,x,z)$ is holomorphic on $\Omega$ for every fixed $t\in(0,T)$, $x\in\mathbb{R}$. It will be assumed that the following properties {\rm (i)--(iv)} hold.

\begin{enumerate}
\item[{\rm (i)}] For every fixed $z\in S_\alpha$ the function $G(\,\cdot\,,\,\cdot\,,z)\in\AC_{1,2}((0,T)\times\mathbb{R})$
is a solution of the time dependent Schrödinger equation
\begin{equation}\label{Eq_Schroedinger_G}
i\frac{\partial}{\partial t}G(t,x,z)=\Big(-\frac{\partial^2}{\partial x^2}+V(t,x)\Big)G(t,x,z)\quad\text{for a.e. }t\in(0,T),\,x\in\mathbb{R}.
\end{equation}

\item[{\rm (ii)}] There exists $a\in\AC((0,T))$ with $a(t)>0$ and $\lim_{t\rightarrow 0^+}a(t)=\infty$, such that the function $\widetilde{G}$ in the decomposition
\begin{equation}\label{Eq_G_decomposition}
G(t,x,z)=e^{ia(t)(z-x)^2}\widetilde{G}(t,x,z),\qquad t\in(0,T),\,x\in\mathbb{R},\,z\in\Omega,
\end{equation}
satisfies
\begin{equation}\label{Eq_G_estimate}
|\widetilde{G}(t,x,z)|\leq A_0(t,x)e^{B_0(t,x)|z|},\qquad t\in(0,T),\,x\in\mathbb{R},\,z\in\Omega.
\end{equation}
Here $A_0,B_0:(0,T)\times\mathbb{R}\rightarrow[0,\infty)$ are nonnegative continuous functions, such that
\begin{equation}\label{Eq_Coefficient_limits}
\frac{A_0}{\sqrt{a}}\text{ and }B_0\text{ extend continuously to }[0,T)\times\mathbb{R}.
\end{equation}

\item[{\rm (iii)}] For all $x\in\mathbb{R}$ and  $z\in\Omega$ one has
\begin{equation}\label{Eq_G_initial_condition}
\lim\limits_{t\rightarrow 0^+}\frac{\widetilde{G}(t,x,z)}{\sqrt{a(t)}}=\frac{1}{\sqrt{i\pi}}.
\end{equation}

\item[{\rm (iv)}] There exist a nonnegative function $A_1\in L^1_\text{\rm loc}((0,T)\times\mathbb R)$ and a nonnegative continuous function $B_1:(0,T)\times\mathbb R\rightarrow [0,\infty)$, such that for every fixed $t\in(0,T)$ the spatial derivatives of $\widetilde G$ and for every fixed $x\in\mathbb{R}$ the time derivative of $\widetilde{G}$ are exponentially bounded, that is, the bounds
\begin{equation}\label{Eq_G_derivative_estimate}
\Big|\frac{\partial}{\partial x}\widetilde{G}(t,x,z)\Big|,\,\Big|\frac{\partial^2}{\partial x^2}\widetilde{G}(t,x,z)\Big|,\,\Big|\frac{\partial}{\partial t}\widetilde{G}(t,x,z)\Big|\leq A_1(t,x)e^{B_1(t,x)|z|}
\end{equation}
hold for all $z\in S_\alpha$.
\end{enumerate}
\end{ass}

We briefly comment on some of the conditions in Assumption~\ref{ass_Greensfunction} and also refer the reader to Section~\ref{EXAMP} for explicit examples of Green's
functions that satisfy Assumption~\ref{ass_Greensfunction}.

\begin{bem}
The assumption on the holomorphy of $G$ on $\Omega$ in the $z$-variable is needed to apply the Fresnel integral technique of Corollary~\ref{cor_Fresnel_integral_R}. In explicit examples and applications one typically starts with a Green's function as in \eqref{Eq_G_real} and verifies that it admits a holomorphic continuation to $G:(0,T)\times\mathbb{R}\times\Omega\rightarrow\mathbb{C}$.
The crucial assumption that allows to apply the Fresnel integral method is the decomposition \eqref{Eq_G_decomposition}, where the exponential $e^{ia(t)(z-x)^2}$
(quadratic in $z$) is separated from the remainder $\widetilde{G}$, which admits the (at most linear) exponential growth \eqref{Eq_G_estimate}. The rotation of the integration path in \eqref{Eq_Fresnel_integral_R-} from the real line into the complex plane, turns the factor $e^{ia(t)(z-x)^2}$ into a Gaussian which then dominates the linear exponential growth of $\widetilde{G}$ in the integral \eqref{Eq_Psi_integral}.
\end{bem}

\begin{bem}
We do not explicitly require the Green's function to converge to the delta distribution
\begin{equation}\label{Eq_G_delta}
\lim_{t\rightarrow 0^+}G(t,x,y)=\delta(x-y).
\end{equation}
In our situation, the counterpart of this standard assumption is the limit condition \eqref{Eq_G_initial_condition}, which is the key ingredient to ensure the initial value \eqref{Eq_Schroedinger2} of the wave function.
For us, the limit condition \eqref{Eq_G_initial_condition} is convenient, since in examples it is often easier to check than \eqref{Eq_G_delta}.
\end{bem}

The next theorem is the main result in the abstract part of this paper. It will be shown that under Assumption~\ref{ass_Greensfunction}
the integral \eqref{Eq_Psi_integral} is meaningful as a Fresnel integral and that the resulting function $\Psi$ is a solution of the time dependent Schrödinger equation \eqref{Eq_Schroedinger}.

\begin{satz}\label{satz_Greensfunction}
Let $G:(0,T)\times\mathbb{R}\times\mathbb{R}\rightarrow\mathbb{C}$ be as in Assumption~\ref{ass_Greensfunction}. Furthermore, let $F:\mathbb{R}\rightarrow\mathbb{C}$ be some initial condition, which admits a holomorphic continuation to the complex domain $\Omega$ from Assumption~\ref{ass_Greensfunction}, and satisfies the estimate
\begin{equation}\label{Eq_F_estimate}
|F(z)|\leq Ae^{B|z|},\qquad z\in\Omega,
\end{equation}
for some $A,B\geq 0$. Then the wave function
\begin{equation}\label{Eq_Psi}
\Psi(t,x)\coloneqq\lim\limits_{\varepsilon\rightarrow 0^+}\int_\mathbb{R}e^{-\varepsilon y^2}G(t,x,y)F(y)dy,\quad t\in(0,T),\,x\in\mathbb{R},
\end{equation}
exists and $\Psi\in \AC_{1,2}((0,T)\times\mathbb{R})$ is a solution of the Cauchy problem
\begin{subequations}
\begin{align}
i\frac{\partial}{\partial t}\Psi(t,x)&=\Big(-\frac{\partial^2}{\partial x^2}+V(t,x)\Big)\Psi(t,x),&\text{for a.e. }t\in(0,T),\,x\in\mathbb{R},\label{Eq_Schroedinger_Psi} \\
\lim\limits_{t\rightarrow 0^+}\Psi(t,x)&=F(x), & x\in\mathbb{R}. \label{Eq_Initial_Psi}
\end{align}
\end{subequations}
\end{satz}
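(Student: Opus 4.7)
\textit{Existence and Fresnel representation.} Using the decomposition \eqref{Eq_G_decomposition}, I would write the regularized integrand as $e^{-\varepsilon y^2}e^{ia(t)(y-x)^2}\widetilde{G}(t,x,y)F(y)$. The product $\widetilde{G}(t,x,\cdot)F(\cdot)$ is holomorphic on $\Omega$, and combining \eqref{Eq_G_estimate} with \eqref{Eq_F_estimate} it satisfies a linear exponential bound on $S_\alpha$. Corollary~\ref{cor_Fresnel_integral_R}(i), applied with $y_0=0$, $a=a(t)$ and $y_1=x$, then gives both the existence of the limit in \eqref{Eq_Psi} and the absolutely convergent rotated representation
\[
\Psi(t,x)=e^{i\alpha}\int_\mathbb{R} e^{ia(t)(ye^{i\alpha}-x)^2}\widetilde{G}(t,x,ye^{i\alpha})F(ye^{i\alpha})\,dy.
\]

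\textit{Regularity and Schr\"odinger equation.} Working from this representation I would differentiate under the integral sign. The modulus $e^{-a(t)\sin(2\alpha)y^2+2a(t)\sin(\alpha)xy}$ of the Gaussian factor dominates the linear exponential growth of the $\widetilde{G}$-derivatives \eqref{Eq_G_derivative_estimate} and of $F$, with bounds locally uniform in $(t,x)$ by continuity of the auxiliary functions. Dominated convergence then delivers $\Psi\in\AC_{1,2}((0,T)\times\mathbb{R})$ together with Fresnel integral expressions for $\partial_t\Psi$, $\partial_x\Psi$ and $\partial_x^2\Psi$; since $G$ itself solves \eqref{Eq_Schroedinger_G} inside the integrand, equation \eqref{Eq_Schroedinger_Psi} for $\Psi$ follows at once.

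\textit{Initial condition.} This is the key step. Using holomorphicity on $\Omega$ together with the Gaussian decay of $e^{ia(t)(z-x)^2}$ along rays of direction $e^{i\alpha}$, I would deform the integration path from $e^{i\alpha}\mathbb{R}$ to the parallel line through $x$, then rescale by $\tau=r\sqrt{a(t)}$ to obtain
\[
\Psi(t,x)=e^{i\alpha}\int_\mathbb{R} e^{i\tau^2 e^{2i\alpha}}\,\frac{\widetilde{G}(t,x,x+\tau e^{i\alpha}/\sqrt{a(t)})}{\sqrt{a(t)}}\,F\bigl(x+\tau e^{i\alpha}/\sqrt{a(t)}\bigr)\,d\tau.
\]
Condition \eqref{Eq_G_initial_condition}, upgraded to a locally uniform limit in $z$ by Vitali's theorem applied to the uniformly bounded holomorphic family $\widetilde{G}(t,x,\cdot)/\sqrt{a(t)}$ (boundedness coming from \eqref{Eq_G_estimate} combined with \eqref{Eq_Coefficient_limits}), gives convergence of the $\widetilde{G}/\sqrt{a(t)}$ factor to $1/\sqrt{i\pi}$ along $z_t=x+\tau e^{i\alpha}/\sqrt{a(t)}\to x$; continuity of $F$ handles the other factor; and $|e^{i\tau^2 e^{2i\alpha}}|=e^{-\tau^2\sin(2\alpha)}$ provides an integrable majorant. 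Dominated convergence combined with the Fresnel identity $e^{i\alpha}\int_\mathbb{R}e^{i\tau^2 e^{2i\alpha}}d\tau=\sqrt{i\pi}$ then yields $\lim_{t\to 0^+}\Psi(t,x)=F(x)$. The hardest part is exactly here: upgrading the pointwise limit \eqref{Eq_G_initial_condition} to one along moving arguments $x+\tau e^{i\alpha}/\sqrt{a(t)}$ with uniform control for $t$ near $0$, which is precisely what the extension requirement \eqref{Eq_Coefficient_limits} is designed to supply. A subordinate technical point is justifying the upstream contour shift inside $\Omega$; this is automatic when $\Omega=\mathbb{C}$ as in the examples of Section~\ref{EXAMP}, while in the abstract setting one closes suitable rectangles in the double sector and lets their vertical sides escape to infinity using the Gaussian decay.
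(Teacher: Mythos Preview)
Your first two steps match the paper's Steps~1 and~2. For the initial condition you take a genuinely different route: the paper splits $\int_\mathbb{R}$ into $(-\infty,y_1)\cup[y_1,y_2]\cup(y_2,\infty)$ with $y_1<x<y_2$, shows the two tails vanish via an explicit $\Lambda(\xi)=e^{\xi^2}\erfc(\xi)$ estimate, and treats the compact middle piece by integration by parts against the primitive $\erf\bigl(i\sqrt{ia(t)}(y-x)\bigr)$ of the Gaussian, invoking \eqref{Eq_G_initial_condition} term by term. Your centred--rescaling argument with Vitali and dominated convergence is more economical and sidesteps the error-function bookkeeping; once the centred representation is in hand it works exactly as you say.

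The gap, however, is in that contour shift, and it is not a subordinate point. The translated line $x+e^{i\alpha}\mathbb{R}$ is \emph{not} contained in $S_\alpha$: for $x>0$ and small $s>0$ the point $x-se^{i\alpha}$ lies in the open fourth quadrant, and for $s>x/\cos\alpha$ one has $\Arg(x-se^{i\alpha})=\pi+\arctan\bigl(\frac{s\sin\alpha}{s\cos\alpha-x}\bigr)>\pi+\alpha$; hence the parallelogram joining $e^{i\alpha}\mathbb{R}$ to $x+e^{i\alpha}\mathbb{R}$ leaves $S_\alpha$ on one of its short sides, and since $\Omega$ is only assumed to contain $S_\alpha$ you cannot close it there. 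The remedy is to drop to a smaller opening angle: because $\Omega$ is open and contains $S_\alpha$, for every $x\in\mathbb{R}$ there exists $\beta(x)\in(0,\alpha]$ with $x+S_{\beta(x)}\subset\Omega$; applying Corollary~\ref{cor_Fresnel_integral_R}(i) to the shifted function $z\mapsto\widetilde{G}(t,x,x+z)F(x+z)$ on $S_{\beta(x)}$ yields directly
\[
\Psi(t,x)=e^{i\beta(x)}\int_\mathbb{R}e^{ia(t)r^2e^{2i\beta(x)}}\,\widetilde{G}\bigl(t,x,x+re^{i\beta(x)}\bigr)F\bigl(x+re^{i\beta(x)}\bigr)\,dr,
\]
after which your substitution $\tau=r\sqrt{a(t)}$, Vitali upgrade of \eqref{Eq_G_initial_condition}, and dominated convergence (with majorant $Ce^{-\tau^2\sin(2\beta(x))+D|\tau|}$ rather than the bare Gaussian) go through. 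The paper actually establishes precisely this centred representation in the proof of Theorem~\ref{satz_Convergence_wavefunction}, so the missing ingredient is at hand; note also that the P\"oschl--Teller example has $\Omega\neq\mathbb{C}$, so the general case is not vacuous.
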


\begin{bem}\label{bembem}
If we replace the growth condition \eqref{Eq_G_estimate} in Assumption~\ref{ass_Greensfunction}~(ii) by the stronger condition
\begin{equation}\label{Eq_G_estimate_Im}
|\widetilde{G}(t,x,z)|\leq A_0(t,x)e^{B_0(t,x)|\Im(z)|},\qquad t\in(0,T),\,x\in\mathbb{R},\,z\in\Omega,
\end{equation}
and also replace \eqref{Eq_F_estimate} by the stronger condition
\begin{equation}\label{asdd}
|F(z)|\leq Ae^{B|\Im(z)|},\qquad z\in\Omega,
\end{equation}
then it follows from Corollary~\ref{cor_Fresnel_integral_R}~(ii) that the wave function \eqref{Eq_Psi} can be written in the equivalent form
\begin{equation*}
\Psi(t,x)=\lim\limits_{R_1,R_2\rightarrow\infty}\int_{-R_1}^{R_2}G(t,x,y)F(y)dy,\quad t\in(0,T),\,x\in\mathbb{R}.
\end{equation*}
We point out that the stronger growth condition \eqref{Eq_G_estimate_Im} on  the Green's function is satisfied in all applications in Section~\ref{EXAMP}.
However, the growth condition \eqref{asdd} for the initial condition $F$ is rather restrictive and it is desirable to allow also initial conditions that may be unbounded on the real line. See for example the type of initial conditions which arise naturally for the supershift property in Theorem~\ref{satz_Supershift_property}.
\end{bem}

\begin{proof}[Proof of Theorem~\ref{satz_Greensfunction}]
\textit{Step 1.} In the first step we apply Corollary~\ref{cor_Fresnel_integral_R}, to show that the expression \eqref{Eq_Psi} for the wave function is meaningful. For this, we fix $t\in(0,T)$, $x\in\mathbb{R}$ and use the estimates \eqref{Eq_G_estimate} and \eqref{Eq_F_estimate} to get
\begin{equation}\label{Eq_Estimate_Integrand1}
|\widetilde{G}(t,x,z)F(z)|\leq AA_0(t,x)e^{(B+B_0(t,x))|z|},\qquad z\in S_\alpha.
\end{equation}
Hence, due to the decomposition \eqref{Eq_G_decomposition}, the assumptions of Corollary~\ref{cor_Fresnel_integral_R} are satisfied, which means, that the wave function \eqref{Eq_Psi} exists and admits the absolutely convergent representation
\begin{align}
\Psi(t,x)&=\lim\limits_{\varepsilon\rightarrow 0^+}\int_\mathbb{R}e^{-\varepsilon y^2}e^{ia(t)(y-x)^2}\widetilde{G}(t,x,y)F(y)dy \notag \\
&=e^{i\alpha}\int_\mathbb{R}e^{ia(t)(ye^{i\alpha}-x)^2}\widetilde{G}(t,x,ye^{i\alpha})F(ye^{i\alpha})dy \notag \\
&=e^{i\alpha}\int_\mathbb{R}G(t,x,ye^{i\alpha})F(ye^{i\alpha})dy. \label{Eq_Psi_absolute_convergent}
\end{align}

\noindent \textit{Step 2.} We show that the function $\Psi$ in \eqref{Eq_Psi}, is a solution of the Schrödinger equation \eqref{Eq_Schroedinger_Psi}. Roughly speaking, since $G$ is already a solution of \eqref{Eq_Schroedinger_G} by Assumption~\ref{ass_Greensfunction}~(i), one needs to check that wave function \eqref{Eq_Psi_absolute_convergent}
belongs to the space $\AC_{1,2}((0,T)\times\mathbb{R})$ and that the derivatives can be carried inside the integral.

Note first, that $G(\,\cdot\,,x,z)\in\AC((0,T))$ for every $x\in\mathbb{R}$, $z\in\Omega$ by Assumption~\ref{ass_Greensfunction}~(i) and hence for any $t_0\in(0,T)$ we have
\begin{equation*}
G(t,x,z)=G(t_0,x,z)+\int_{t_0}^t\frac{\partial}{\partial\tau}G(\tau,x,z)d\tau,\quad t\in(0,T),\,x\in\mathbb{R},\,z\in S_\alpha.
\end{equation*}
This leads to the following integral representation of the wave function \eqref{Eq_Psi_absolute_convergent}
\begin{equation}\label{Eq_Psi_derivative}
\Psi(t,x)=\Psi(t_0,x)+e^{i\alpha}\int_\mathbb{R}\int_{t_0}^t\frac{\partial}{\partial\tau}G(\tau,x,ye^{i\alpha})d\tau F(ye^{i\alpha})dy.
\end{equation}
Using the decomposition \eqref{Eq_G_decomposition} we can write the derivative as
\begin{equation*}
\frac{\partial}{\partial\tau}G(\tau,x,ye^{i\alpha})=\Big(ia'(\tau)(ye^{i\alpha}-x)^2\widetilde{G}(\tau,x,ye^{i\alpha})+\frac{\partial}{\partial\tau}\widetilde{G}(\tau,x,ye^{i\alpha})\Big)e^{ia(\tau)(ye^{i\alpha}-x)^2}.
\end{equation*}
Then we use \eqref{Eq_G_estimate}, \eqref{Eq_G_derivative_estimate} and \eqref{Eq_F_estimate} to estimate the integrand in \eqref{Eq_Psi_derivative} by
\begin{equation*}
\begin{split}
&\Big|\frac{\partial}{\partial\tau}G(\tau,x,ye^{i\alpha})F(ye^{i\alpha})\Big| \\
&\hskip 1cm \leq A\Big(A_0(\tau,x)|a'(\tau)||ye^{i\alpha}-x|^2e^{B_0(\tau,x)|y|}+A_1(\tau,x)e^{B_1(\tau,x)|y|}\Big) \\
&\hskip 8cm  \cdot e^{(B+2|x|a(\tau)\sin(\alpha))|y|}e^{-a(\tau)\sin(2\alpha)y^2}.
\end{split}
\end{equation*}
Since $A_0(\,\cdot\,,x)$, $B_0(\,\cdot\,,x)$, $B_1(\,\cdot\,,x)$, $a(\,\cdot\,)$ are continuous and $a'(\,\cdot\,),A_1(\,\cdot\,,x)\in L^1_\text{loc}((0,T))$
by Assumption~\ref{ass_Greensfunction}, it follows that the integrand in \eqref{Eq_Psi_derivative}
is integrable on $[t_0,t]$. Moreover, the factor $e^{-a(\tau)\sin(2\alpha)y^2}$ also implies integrability
with respect to $y\in\mathbb{R}$, and hence the integrand is absolutely integrable on $[t_0,t]\times\mathbb{R}$. Therefore, the order of integration in \eqref{Eq_Psi_derivative} can be interchanged and we obtain
\begin{equation*}
\Psi(t,x)=\Psi(t_0,x)+e^{i\alpha}\int_{t_0}^t\int_\mathbb{R}\frac{\partial}{\partial\tau}G(\tau,x,ye^{i\alpha})F(ye^{i\alpha})dyd\tau.
\end{equation*}
In particular, this shows $\Psi(\,\cdot\,,x)\in\AC((0,T))$ and the derivative with respect to $t$ exists almost everywhere and is given by
\begin{equation*}
\frac{\partial}{\partial t}\Psi(t,x)=e^{i\alpha}\int_\mathbb{R}\frac{\partial}{\partial t}G(t,x,ye^{i\alpha})F(ye^{i\alpha})dy.
\end{equation*}
Using the same argument, also $\Psi(t,\,\cdot\,)$ and $\frac{\partial}{\partial x}\Psi(t,\,\cdot\,)$ are absolutely continuous, with spatial derivatives almost everywhere given by
\begin{align*}
\frac{\partial}{\partial x}\Psi(t,x)&=e^{i\alpha}\int_\mathbb{R}\frac{\partial}{\partial x}G(t,x,ye^{i\alpha})F(ye^{i\alpha})dy, \\
\frac{\partial^2}{\partial x^2}\Psi(t,x)&=e^{i\alpha}\int_\mathbb{R}\frac{\partial^2}{\partial x^2}G(t,x,ye^{i\alpha})F(ye^{i\alpha})dy.
\end{align*}
This means $\Psi\in \AC_{1,2}((0,T)\times\mathbb{R})$
and from \eqref{Eq_Schroedinger_G} we conclude
that the Schrödinger equation \eqref{Eq_Schroedinger_Psi} is satisfied for a.e. $t\in (0,T)$, $x\in\mathbb R$.

\medskip

\noindent \textit{Step 3.} Now we verify the initial condition \eqref{Eq_Initial_Psi}. For this, we fix $x\in\mathbb R$ and split up the integral \eqref{Eq_Psi} as
\begin{equation*}
\Psi(t,x)=
\Psi_1(t,x)+\Psi_0(t,x)+\Psi_2(t,x),
\end{equation*}
where we have set
\begin{equation*}
\begin{split}
\Psi_1(t,x)&=\lim\limits_{\varepsilon\rightarrow 0^+}\int_{-\infty}^{y_1}e^{-\varepsilon y^2}G(t,x,y)F(y)dy, \\
\Psi_0(t,x)&=\lim\limits_{\varepsilon\rightarrow 0^+}\int_{y_1}^{y_2}e^{-\varepsilon y^2}G(t,x,y)F(y)dy, \\
\Psi_2(t,x)&=\lim\limits_{\varepsilon\rightarrow 0^+}\int_{y_2}^\infty e^{-\varepsilon y^2}G(t,x,y)F(y)dy,
\end{split}
\end{equation*}
and $y_1<0$ and $y_2>0$ are chosen such that $x\in(y_1,y_2)$. Starting with $\Psi_2$, we use the fact that the shifted sector $y_2+S_\alpha^+$  is contained in $S_\alpha^+$. A similar estimate as in \eqref{Eq_Estimate_Integrand1} shows, that we may apply Corollary~\ref{cor_Fresnel_integral}~(i)
to the shifted integrand $\widetilde{G}(t,x,y+y_2)F(y+y_2)$ and find
\begin{align*}
\Psi_2(t,x)&=\lim\limits_{\varepsilon\rightarrow 0^+}\int_{y_2}^\infty e^{-\varepsilon y^2}e^{ia(t)(y-x)^2}\widetilde{G}(t,x,y)F(y)dy \\
&=\lim\limits_{\varepsilon\rightarrow 0^+}\int_0^\infty e^{-\varepsilon(y+y_2)^2}e^{ia(t)(y+y_2-x)^2}\widetilde{G}(t,x,y+y_2)F(y+y_2)dy \\
&=e^{i\alpha}\int_0^\infty e^{ia(t)(ye^{i\alpha}+y_2-x)^2}\widetilde{G}(t,x,ye^{i\alpha}+y_2)F(ye^{i\alpha}+y_2)dy \\
&=e^{i\alpha}\int_0^\infty G(t,x,ye^{i\alpha}+y_2)F(ye^{i\alpha}+y_2)dy.
\end{align*}
In this form we can estimate $\Psi_2(t,x)$ as
\begin{align}
|\Psi_2(t,x)|&\leq\int_0^\infty\big|e^{ia(t)(ye^{i\alpha}+y_2-x)^2}\widetilde{G}(t,x,ye^{i\alpha}+y_2)F(ye^{i\alpha}+y_2)\big|dy \notag \\
&\leq AA_0(t,x)\int_0^\infty e^{-a(t)(y^2\sin(2\alpha)+2(y_2-x)y\sin(\alpha))}e^{(B+B_0(t,x))|ye^{i\alpha}+y_2|}dy \notag \\
&\leq AA_0(t,x)e^{(B+B_0(t,x))y_2}\int_0^\infty e^{-a(t)\sin(2\alpha)y^2}e^{(B+B_0(t,x)-2a(t)(y_2-x)\sin(\alpha))y}dy \notag \\
&=\frac{AA_0(t,x)\sqrt{\pi}}{2\sqrt{a(t)\sin(2\alpha)}}e^{(B+B_0(t,x))y_2}\Lambda\bigg(\frac{\sqrt{a(t)\tan(\alpha)}}{\sqrt{2}}\Big(y_2-x-\frac{B+B_0(t,x)}{2a(t)}\Big)\bigg), \label{Eq_Lambda_integral}
\end{align}
where, for a shorter notation, we used $\Lambda(\xi)\coloneqq e^{\xi^2}(1-\erf(\xi))$ as a modification of the well known error function; for the computation of the integral we refer to \cite[Lemma 2.1]{ABCS20}. According to Assumption~\ref{ass_Greensfunction}~(ii) we know that $\frac{A_0}{\sqrt{a}}$ and $B_0$ remain finite for $t\rightarrow 0^+$, and also that $a\rightarrow\infty$ for $t\rightarrow 0^+$. Therefore, since $\lim_{\xi\rightarrow\infty}\Lambda(\xi)=0$, see \cite[Formula 7.1.23]{AbSt72}, and $x<y_2$ we conclude
\begin{equation}\label{Eq_Psi2_limit}
\lim\limits_{t\rightarrow 0^+}\Psi_2(t,x)=0.
\end{equation}
In the same way one verifies, that
\begin{equation}\label{Eq_Psi1_limit}
\lim\limits_{t\rightarrow 0^+}\Psi_1(t,x)=0.
\end{equation}
For the limit of $\Psi_0$ for $t\rightarrow 0^+$ we first note that due to the dominated convergence theorem we can write
\begin{equation*}
\Psi_0(t,x)=\int_{y_1}^{y_2}G(t,x,y)F(y)dy.
\end{equation*}
Using the derivative $\frac{d}{d\xi}\erf(\xi)=\frac{2}{\sqrt{\pi}}e^{-\xi^2}$ of the error function together with the decomposition \eqref{Eq_G_decomposition} we rewrite the integral as
\begin{equation*}
\Psi_0(t,x)=\frac{\sqrt{\pi}}{2i\sqrt{ia(t)}}\int_{y_1}^{y_2}\bigg(\frac{\partial}{\partial y}\erf\big(i\sqrt{ia(t)}(y-x)\big)\bigg)\widetilde{G}(t,x,y)F(y)dy.
\end{equation*}
Applying integration by parts then leads to the four terms
\begin{equation}\label{Eq_Psi0}
\begin{split}
\Psi_0(t,x)=\frac{\sqrt{i\pi}}{2}\bigg(&-\erf\big(i\sqrt{ia(t)}(y_2-x)\big)\frac{\widetilde{G}(t,x,y_2)}{\sqrt{a(t)}}F(y_2) \\
&+\erf\big(i\sqrt{ia(t)}(y_1-x)\big)\frac{\widetilde{G}(t,x,y_1)}{\sqrt{a(t)}}F(y_1) \\
&+\int_{y_1}^{y_2}\erf\big(i\sqrt{ia(t)}(y-x)\big)\frac{\frac{\partial}{\partial y}\widetilde{G}(t,x,y)}{\sqrt{a(t)}}F(y)dy \\
&+\int_{y_1}^{y_2}\erf\big(i\sqrt{ia(t)}(y-x)\big)\frac{\widetilde{G}(t,x,y)}{\sqrt{a(t)}}F'(y)dy\bigg).
\end{split}
\end{equation}
Due to the limit
\begin{equation*}
\lim\limits_{a\rightarrow\infty}\erf\big(i\sqrt{ia}\,\xi\big)=\sgn(-\xi)=\left\{\begin{array}{ll} 1, & \text{if }\xi<0, \\ -1, & \text{if }\xi>0, \end{array}\right.
\end{equation*}
of the error function \cite[Formula 7.1.23]{AbSt72}, as well as the limit \eqref{Eq_G_initial_condition}, we find for the first two terms in \eqref{Eq_Psi0}
\begin{align*}
\lim\limits_{t\rightarrow 0^+}\erf\big(i\sqrt{ia(t)}(y_2-x)\big)\frac{\widetilde{G}(t,x,y_2)}{\sqrt{a(t)}}F(y_2)&=-\frac{F(y_2)}{\sqrt{i\pi}}, \\
\lim\limits_{t\rightarrow 0^+}\erf\big(i\sqrt{ia(t)}(y_1-x)\big)\frac{\widetilde{G}(t,x,y_1)}{\sqrt{a(t)}}F(y_1)&=\frac{F(y_1)}{\sqrt{i\pi}}.
\end{align*}
Consider now the fourth term in \eqref{Eq_Psi0}. Due to the estimate \eqref{Eq_G_estimate}, we have
\begin{equation*}
\frac{|\widetilde{G}(t,x,y)|}{\sqrt{a(t)}}\leq\frac{A_0(t,x)}{\sqrt{a(t)}}e^{B_0(t,x)|y|},\qquad y\in[y_1,y_2],
\end{equation*}
which is uniformly bounded for $t\rightarrow 0^+$ by \eqref{Eq_Coefficient_limits}. Since also the error function $\xi\mapsto\erf(i\sqrt{i}\,\xi)$ is bounded on $\mathbb{R}$, there exists a $t$-uniform majorant and hence the limit can be carried inside the integral. This gives
\begin{align*}
\lim\limits_{t\rightarrow 0^+}\int_{y_1}^{y_2}\erf\big(i\sqrt{ia(t)}(y-x)\big)\frac{\widetilde{G}(t,x,y)}{\sqrt{a(t)}}F'(y)dy&=\frac{1}{\sqrt{i\pi}}\int_{y_1}^{y_2}\sgn(x-y)F'(y)dy \\
&=\frac{2F(x)-F(y_1)-F(y_2)}{\sqrt{i\pi}}.
\end{align*}
For the third term in \eqref{Eq_Psi0} we note, that the compact interval $[y_1,y_2]$ is contained in the open set $\Omega$. Hence there exists some $r>0$ such that for every $y\in[y_1,y_2]$ the closed ball $B_r(y)$ is contained in $\Omega$. The Cauchy integral formula then gives
\begin{equation}\label{Eq_Gy_Cauchy_integral}
\frac{\partial}{\partial y}\widetilde{G}(t,x,y)=\frac{1}{2\pi i}\int_{\partial B_r(y)}\frac{\widetilde{G}(t,x,z)}{(z-y)^2}dz=\frac{1}{2\pi r}\int_0^{2\pi}\widetilde{G}(t,x,y+re^{i\varphi})e^{-i\varphi}d\varphi.
\end{equation}
Due to \eqref{Eq_G_estimate} we can estimate the integrand as
\begin{equation}\label{Eq_G_Cauchy_estimate}
\frac{|\widetilde{G}(t,x,y+re^{i\varphi})|}{\sqrt{a(t)}}\leq\frac{A_0(t,x)}{\sqrt{a(t)}}e^{B_0(t,x)|y+re^{i\varphi}|}\leq\frac{A_0(t,x)}{\sqrt{a(t)}}e^{B_0(t,x)(|y|+r)},
\end{equation}
and from \eqref{Eq_Coefficient_limits} we see that it admits an $t$-independent upper bound near $t=0^+$. Hence we are allowed to carry the limit inside the integral and get
\begin{equation}\label{Eq_Gy}
\lim\limits_{t\rightarrow 0^+}\frac{\frac{\partial}{\partial y}\widetilde{G}(t,x,y)}{\sqrt{a(t)}}=\frac{1}{2\pi r}\int_0^{2\pi}\lim\limits_{t\rightarrow 0^+}\frac{\widetilde{G}(t,x,y+re^{i\varphi})}{\sqrt{a(t)}}e^{-i\varphi}d\varphi=\frac{1}{2\pi r\sqrt{i\pi}}\int_0^{2\pi}e^{-i\varphi}d\varphi=0.
\end{equation}
Moreover, by the representation \eqref{Eq_Gy_Cauchy_integral} and the estimate \eqref{Eq_G_Cauchy_estimate} we get
\begin{equation*}
\frac{|\frac{\partial}{\partial y}\widetilde{G}(t,x,y)|}{\sqrt{a(t)}}\leq\frac{A_0(t,x)}{r\sqrt{a(t)}}e^{B_0(t,x)(|y|+r)},\qquad y\in[y_1,y_2].
\end{equation*}
Again, by \eqref{Eq_Coefficient_limits} we obtain an integrable and $t$-independent upper bound of the third term in \eqref{Eq_Psi0}. Hence we are allowed to carry the limit $t\rightarrow 0^+$ inside the integral, which vanishes according to \eqref{Eq_Gy}. Altogether this shows that \eqref{Eq_Psi0} converges to
\begin{equation*}
\lim\limits_{t\rightarrow 0^+}\Psi_0(t,x)=\frac{1}{2}\Big(F(y_2)+F(y_1)+2F(x)-F(y_1)-F(y_2)\Big)=F(x).
\end{equation*}
Together with \eqref{Eq_Psi2_limit} and \eqref{Eq_Psi1_limit} this finally proves the initial value \eqref{Eq_Initial_Psi}.
\end{proof}

In the next theorem we show that the solution $\Psi$ of the Schrödinger equation depends continuously on the initial condition. Note, that  the assumed convergence \eqref{Eq_A1_convergence} of the initial condition is stronger than the resulting uniform convergence on compact sets \eqref{Eq_Psi_convergence} at times $t>0$. However, the stronger convergence \eqref{Eq_A1_convergence} is justified, since this is the standard type of convergence in which superoscillations are normally treated, see for example \eqref{Eq_Superoscillations_convergence}. For convenience we use the notation $\Psi(t,x;F)$ to emphasize the initial condition.

\begin{satz}\label{satz_Convergence_wavefunction}
Let $G:(0,T)\times\mathbb{R}\times\mathbb{R}\rightarrow\mathbb{C}$ be as in Assumption~\ref{ass_Greensfunction}. Moreover, let $F,(F_n)_n:\mathbb{R}\rightarrow\mathbb{C}$ be initial conditions which admit holomorphic extensions to $\Omega$ and satisfy the growth condition \eqref{Eq_F_estimate} for some $A,B,(A_n)_n,(B_n)_n\geq 0$. If the sequence $(F_n)_n$ converges as
\begin{equation}\label{Eq_A1_convergence}
\lim\limits_{n\rightarrow\infty}\sup\limits_{z\in\Omega}|F(z)-F_n(z)|e^{-C|z|}=0
\end{equation}
for some $C\geq 0$, then also the corresponding wave functions converge as
\begin{equation}\label{Eq_Psi_convergence}
\lim\limits_{n\rightarrow\infty}\Psi(t,x;F_n)=\Psi(t,x;F)
\end{equation}
uniformly on compact subsets of $[0,T)\times\mathbb{R}$.
\end{satz}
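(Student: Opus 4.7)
The plan is to exploit the linearity of the Schr\"odinger equation and reduce to showing $\Psi(\,\cdot\,,\,\cdot\,;H_n) \to 0$ uniformly on compacta, where $H_n := F - F_n$ is holomorphic on $\Omega$ with $|H_n(z)| \leq \varepsilon_n e^{C|z|}$ for some $\varepsilon_n \to 0$ by \eqref{Eq_A1_convergence}. Since $H_n$ fulfils the hypothesis of Theorem~\ref{satz_Greensfunction} with constants $\varepsilon_n$ and $C$, the wave function $\Psi(t,x;H_n)$ is well defined, belongs to $\AC_{1,2}((0,T)\times\mathbb{R})$, and satisfies $\lim_{t\to 0^+}\Psi(t,x;H_n) = H_n(x)$ pointwise. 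Fix a compact $K\subseteq [0,T)\times\mathbb{R}$, enclose it in $[0,T_0]\times[-M,M]$ with $T_0<T$, and split $K = K_\tau \cup K_0$ with $K_\tau := K \cap \{t\geq\tau\}$ and $K_0 := K\cap\{t<\tau\}$ for some small $\tau\in(0,T_0)$ to be chosen.

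On $K_\tau$ I would use the absolutely convergent representation \eqref{Eq_Psi_absolute_convergent} together with the decomposition \eqref{Eq_G_decomposition}; inserting \eqref{Eq_G_estimate} and the bound on $H_n$ majorises the integrand by $\varepsilon_n A_0(t,x) \exp(-a(t)\sin(2\alpha)y^2 + 2a(t)x\sin(\alpha)y + (B_0(t,x)+C)|y|)$, a Gaussian in $y$ with quadratic coefficient bounded below on $K_\tau$. Since $a,A_0,B_0$ are continuous on $(0,T)\times\mathbb{R}$, completing the square shows that the integral is uniformly bounded on $K_\tau$, giving $\sup_{K_\tau}|\Psi(\,\cdot\,,\,\cdot\,;H_n)| = O(\varepsilon_n)$.

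The delicate region is $K_0$, where $a(t)\to\infty$ and the naive bound fails because $A_0(t,x)$ typically grows like $\sqrt{a(t)}$. I would mimic Step~3 of the proof of Theorem~\ref{satz_Greensfunction}: pick $y_1<-M<M<y_2$ so that $x\in(y_1,y_2)$ throughout $K$, and write $\Psi(t,x;H_n)=\Psi_1+\Psi_0+\Psi_2$. For the tails $\Psi_1,\Psi_2$ the derivation leading to \eqref{Eq_Lambda_integral}, applied with $A$ replaced by $\varepsilon_n$ and $B$ by $C$, factorises the bound as $\varepsilon_n$ times $(A_0/\sqrt{a})(t,x)$ times a bounded exponential in $(t,x)$ times $\Lambda$ of an argument that tends to $+\infty$ uniformly on $K_0$ as $t\to 0^+$. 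Since $A_0/\sqrt{a}$ and $B_0$ extend continuously to $t=0$ by \eqref{Eq_Coefficient_limits} and $\Lambda$ is bounded, both tails are $O(\varepsilon_n)$ uniformly on $K_0$. For the bulk $\Psi_0$ I would use the integration-by-parts representation \eqref{Eq_Psi0} with $H_n$ in place of $F$. Each of the four terms carries either $\widetilde G/\sqrt{a}$ or $(\partial_y\widetilde G)/\sqrt{a}$ (the latter controlled by a Cauchy-formula estimate on a small closed disk inside $\Omega$ combined with \eqref{Eq_G_estimate}), together with a bounded $\erf$-factor; all these factors stay uniformly bounded on $K_0\times[y_1,y_2]$. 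Meanwhile $|H_n(y_1)|,|H_n(y_2)|,\sup_{[y_1,y_2]}|H_n|$ and $\sup_{[y_1,y_2]}|H_n'|$ are all $O(\varepsilon_n)$, the derivative bound following from another application of Cauchy's formula on $\Omega$. Hence $\sup_{K_0}|\Psi_0(\,\cdot\,,\,\cdot\,;H_n)|=O(\varepsilon_n)$, and combining with the tail estimates yields uniform convergence on $K_0$.

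The main obstacle is promoting the pointwise near-initial limit of Theorem~\ref{satz_Greensfunction} to one that is uniform in $n$. The decisive observation is that every single estimate used in Step~3 of the proof of Theorem~\ref{satz_Greensfunction} depends on the initial datum only through the two constants $A,B$ in \eqref{Eq_F_estimate}; substituting $A=\varepsilon_n$ together with the fixed $B=C$ then produces bounds that are linear in $\varepsilon_n$ with all remaining constants depending only on $K$, which is exactly what is needed for the claimed uniform convergence \eqref{Eq_Psi_convergence} on compact subsets of $[0,T)\times\mathbb{R}$.
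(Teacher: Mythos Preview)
Your argument is correct, but it takes a considerably longer route than the paper. The paper avoids the entire split into $K_\tau$ and $K_0$, the tail/bulk decomposition, and the integration-by-parts step by a single observation: instead of using the representation \eqref{Eq_Psi_absolute_convergent} with the integration path $ye^{i\alpha}$, one shifts the path through the point $x$ itself, writing
\[
\Psi(t,x;H_n)=e^{i\beta(x)}\int_\mathbb{R}G\bigl(t,x,x+ye^{i\beta(x)}\bigr)H_n\bigl(x+ye^{i\beta(x)}\bigr)\,dy,
\]
where $\beta(x)\in(0,\alpha]$ is chosen continuously so that $x+S_{\beta(x)}\subseteq\Omega$. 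With this shift the factor $e^{ia(t)(z-x)^2}$ becomes $e^{ia(t)y^2e^{2i\beta(x)}}$, whose modulus is $e^{-a(t)\sin(2\beta(x))y^2}$ with \emph{no} linear term in $y$. The remaining integral is then bounded by
\[
\varepsilon_n\,\frac{A_0(t,x)}{\sqrt{a(t)}}\cdot\frac{\sqrt{\pi}}{\sqrt{\sin(2\beta(x))}}\,e^{(C+B_0(t,x))|x|}\,\Lambda\!\Bigl(-\tfrac{C+B_0(t,x)}{2\sqrt{a(t)\sin(2\beta(x))}}\Bigr),
\]
and since $A_0/\sqrt{a}$ and $B_0$ extend continuously to $t=0$ by \eqref{Eq_Coefficient_limits}, while the $\Lambda$-argument tends to $0^-$ (where $\Lambda$ is continuous), the whole expression is $\varepsilon_n$ times a continuous function on $[0,T)\times\mathbb{R}$, hence bounded on any compact set. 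Your approach recovers the same conclusion by reusing the machinery of Step~3 of Theorem~\ref{satz_Greensfunction}, which is legitimate---as you correctly note, every estimate there is linear in the constant $A$ of \eqref{Eq_F_estimate}---but the paper's centred contour makes the small-$t$ difficulty disappear at the outset. One minor point: your phrase ``$\Lambda$ is bounded'' should be read as ``$\Lambda$ is bounded on $[0,\infty)$'', which requires first choosing $\tau$ small enough that the argument of $\Lambda$ in \eqref{Eq_Lambda_integral} is nonnegative on $K_0$; this is implicit in your sketch but worth stating.
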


\begin{proof}
Since the double sector $S_\alpha$ is contained in $\Omega$, there exists for every $x\in\mathbb{R}$ some $\beta(x)\in(0,\alpha]$, continuously depending on $x$, such that the shifted double sector $x+S_{\beta(x)}$ is contained in $\Omega$.
In the same way as in \eqref{Eq_Psi_absolute_convergent} we get the representation
\begin{equation}\label{Eq_Psi_beta}
\Psi(t,x;F)=e^{i\beta(x)}\int_\mathbb{R}G(t,x,x+ye^{i\beta(x)})F(x+ye^{i\beta(x)}),\qquad t\in(0,T),\,x\in\mathbb{R}.
\end{equation}
If we define
$$
L_n\coloneqq\sup_{z\in\Omega}|F(z)-F_n(z)|e^{-C|z|}
$$ from \eqref{Eq_A1_convergence}, we can use \eqref{Eq_G_decomposition} and \eqref{Eq_G_estimate} to estimate
\begin{align*}
|\Psi(t,x;F)-\Psi(t,x;F_n)|&\leq\int_\mathbb{R}\big|G(t,x,x+ye^{i\beta(x)})\big|\big|F(x+ye^{i\beta(x)})-F_n(x+ye^{i\beta(x)})\big|dy \\
&\leq L_nA_0(t,x)\int_\mathbb{R}e^{-a(t)\sin(2\beta(x))y^2}e^{(C+B_0(t,x))|x+ye^{i\beta(x)}|}dy \\
&\leq L_nA_0(t,x)e^{(C+B_0(t,x))|x|}\int_\mathbb{R}e^{-a(t)\sin(2\beta(x))y^2+(C+B_0(t,x))|y|}dy \\
&=\frac{L_nA_0(t,x)\sqrt{\pi}}{\sqrt{a(t)\sin(2\beta(x))}}e^{(C+B_0(t,x))|x|}\Lambda\bigg(-\frac{C+B_0(t,x)}{2\sqrt{a(t)\sin(2\beta(x))}}\bigg),
\end{align*}
where the analytic value of the last integral is similar to the one in \eqref{Eq_Lambda_integral}. Since $L_n\overset{n\rightarrow\infty}{\longrightarrow}0$ by \eqref{Eq_A1_convergence} and the right hand side is continuous in $t$ and $x$ by \eqref{Eq_Coefficient_limits} and the continuity of $\beta$, it follows that the convergence \eqref{Eq_Psi_convergence} is uniform on compact subsets of $[0,T)\times\mathbb{R}$.
\end{proof}

\section{Supershifts and superoscillations}\label{CONTSUPS}

The aim of this section is to investigate the time evolution of superoscillations and the supershift property of the solution $\Psi$ of the Schrödinger equation \eqref{Eq_Schroedinger}. As already mentioned in the introduction, the main novelty of our unified approach, with respect to the existing literature, is, that we are able to consider potentials, where the explicit form of the Green's function is not known. Instead, our results are only based on the regularity and growth conditions on the Green's functions, see Assumption~\ref{ass_Greensfunction}.

\medskip

We start with the abstract definition of a supershift and explain its usefulness and meaning afterwards.

\begin{defi}[Supershift]\label{defi_Supershift}
Let $\mathcal{O},\mathcal U\subseteq\mathbb{C}$ such that $\mathcal{U}\subsetneqq\mathcal{O}$. Let $X$ be a metric space and consider a family
\begin{equation}\label{phis}
\varphi_\kappa:X\rightarrow\mathbb C, \qquad \kappa\in\mathcal O,
\end{equation}
of complex valued functions. We say that a sequence of functions $(\Phi_n)_n$ of the form
\begin{equation}\label{Eq_Supershift_function}
\Phi_n(s)=\sum\limits_{l=0}^nC_l(n)\varphi_{\kappa_l(n)}(s)
,\qquad s\in X,
\end{equation}
with coefficients $C_l(n)\in\mathbb{C}$, $\kappa_l(n)\in\mathcal U$, admits a \emph{supershift}, if there exists some $\kappa\in\mathcal{O}\setminus\mathcal U$, such that
\begin{equation}\label{Eq_Supershift_convergence}
\lim\limits_{n\rightarrow\infty}\Phi_n(s)=\varphi_\kappa(s),\qquad s\in X,
\end{equation}
converges uniformly on compact subsets of $X$.
\end{defi}

\begin{bem}\label{analy}
If the sequence $(\Phi_n)_n$ in \eqref{Eq_Supershift_function} admits a supershift, then the values of $\varphi_\kappa$ for some $\kappa\in\mathcal O$, outside the smaller set $\mathcal{U}$, can be calculated by only using values $\varphi_{\kappa_l(n)}$ at the points
$\kappa_l(n)$ inside $\mathcal U$. Hence, informally speaking, when considering the mapping
$\kappa\mapsto\varphi_\kappa$ in the $\kappa$-variable there is a breeze of analyticity in the air, see also Theorem~\ref{satz_Analyticity}
and Corollary~\ref{cor_Exponentials}.
\end{bem}

Next we discuss a standard example for the supershift property, see also the example \eqref{Eq_Fn} in the introduction, as well as Remark~\ref{superbem} below for the connection to the notion of superoscillations.

\begin{bsp}\label{bsp_varphi_exponentials}
Let $X=\mathbb{C}$ and consider for every $\kappa\in\mathcal{O}=\mathbb{C}$ the exponentials
\begin{equation*}
\varphi_\kappa(z)=e^{i\kappa z},\qquad z\in\mathbb{C}.
\end{equation*}
Let furthermore $\mathcal U=[-1,1]$ and $\kappa\in\mathcal{O}\setminus \mathcal U$ be arbitrary. It was shown in \cite[Lemma 2.4]{YGERFn} that with the coefficients
\begin{equation}\label{Eq_Coefficients}
C_l(n)={n\choose l}\bigg(\frac{1+\kappa}{2}\bigg)^{n-l}\bigg(\frac{1-\kappa}{2}\bigg)^l
\qquad\text{and}\qquad\kappa_l(n)=1-\frac{2l}{n}\in\mathcal{U},
\end{equation}
the sequence of functions
\begin{equation*}
\Phi_n(z)=\sum_{l=0}^nC_l(n)e^{i\kappa_l(n)z},\qquad z\in\mathbb{C},
\end{equation*}
converges as
\begin{equation}\label{Eq_Fn_convergence2}
\lim_{n\rightarrow\infty}\Phi_n(z)=e^{i\kappa z}
\end{equation}
uniformly on compact subsets of $\mathbb{C}$, that is, for any $\kappa\in\mathbb{C}\setminus[-1,1]$, the sequence $(\Phi_n)_n$ admits a supershift. Moreover, according to \cite[Theorem 2.1]{YGERFn} one even has the stronger convergence
\begin{equation}\label{Eq_Superoscillations_convergence}
\lim\limits_{n\rightarrow\infty}\sup\limits_{z\in\mathbb{C}}|\Phi_n(z)-e^{i\kappa z}|e^{-C|z|}=0,
\end{equation}
for some $C\geq 0$, which agrees with the assumption \eqref{Eq_A1_convergence} in Theorem~{\rm\ref{satz_Convergence_wavefunction}}.
\end{bsp}

In the next remark we explain the connection between the notion of supershift in Definition~\ref{defi_Supershift} and the concept of superoscillations, which has attracted a lot of attention in the physical and mathematical literature, see the references mentioned in the introduction. Below we use the definition of superoscillations, for example, from \cite{ABCS20,acsst5}.

\begin{bem}[Superoscillations]\label{superbem}
Sequences of the form
\begin{equation}\label{basic_sequence}
\Phi_n(x)=\sum_{j=0}^n C_j(n)e^{ik_j(n)x},\qquad x\in\mathbb{R},
\end{equation}
with coefficients $C_j(n)\in\mathbb{C}$, $k_j(n)\in\mathbb{R}$, are often called {\em generalized Fourier sequences}. Note, that with
$\varphi_k(x)=e^{ikx}$, $x\in\mathbb{R}$, as in Example~\ref{bsp_varphi_exponentials}, this agrees with the functions in \eqref{Eq_Supershift_function}. The generalized Fourier sequence \eqref{basic_sequence} is
said to be {\em superoscillating} if there exists some $\tilde{k}\in\mathbb{R}$ such that
$$k':=\sup_{n\in\mathbb{N}_0,\,j\in\{0,\dots,n\}}|k_j(n)|<|\tilde{k}|,$$
and there exists a compact subset $K\subset\mathbb{R}$, called {\rm superoscillation set}, such that
\begin{equation}\label{Eq_Compact_convergence}
\lim\limits_{n\rightarrow\infty}\sup\limits_{x\in K}|\Phi_n(x)-e^{i\tilde{k}x}|=0;
\end{equation}
cf. \cite[Definition 5.1]{ABCS20}. Note, that the sequence $(\Phi_n)_n$ converges to a plane wave $e^{i\tilde{k}x}$ with frequency $|\tilde{k}|>k'$, that is, the shift in the $k$-variable is manifested in a shift of the frequencies, which leads to the terminology \textit{superoscillations}. Observe that superoscillations can be viewed as a special case of the supershift in Definition~\ref{defi_Supershift} by choosing
\begin{equation*}
\mathcal O=\mathbb R,\quad \mathcal U=[-k',k'],\quad\text{and}\quad X=K.
\end{equation*}
In fact, the supershift property is heavily inspired by the concept of superoscillations and Definition~\ref{defi_Supershift} is designed in such a way that
it applies to the time evolution of solutions of the Schr\"{o}dinger equation subject to superoscillatory initial data.
\end{bem}

The first main result of this section is the following Theorem~\ref{satz_Supershift_property} on the supershift property of the solution of the
Schr\"{o}dinger equation, which
can be viewed as a corollary of the continuous dependence result from Theorem~\ref{satz_Convergence_wavefunction}.
Roughly speaking we consider a family of initial conditions that admits a supershift (with respect to a slightly stronger form of convergence as in Definition~\ref{defi_Supershift}) and conclude that the corresponding solutions of the
Schrödinger equation admit a similar type of supershift; see also Remark~\ref{bem_Supershift}.

\begin{satz}[Supershift property]\label{satz_Supershift_property}
Let  $G:(0,T)\times\mathbb{R}\times\mathbb{R}\rightarrow\mathbb{C}$ and $\Omega$ be as in Assumption~\ref{ass_Greensfunction}, let
$\mathcal{O},\mathcal{U}\subseteq\mathbb{C}$ with $\mathcal{U}\subsetneqq\mathcal{O}$, and consider a family of analytic functions $\varphi_\kappa:\Omega\rightarrow\mathbb{C}$ with $\kappa\in\mathcal{O}$ that satisfy the estimate
\begin{equation}\label{Eq_varphi_exponential_bound}
|\varphi_\kappa(z)|\leq A(\kappa)e^{B(\kappa)|z|},\qquad z\in\Omega,
\end{equation}
for some $A(\kappa),B(\kappa)\geq 0$ continuously depending on $\kappa$. If a sequence of initial conditions $(F_n)_n$ of the form
\begin{equation}\label{Eq_Initial_Fn}
F_n(z)=\sum\limits_{l=0}^nC_l(n)\varphi_{\kappa_l(n)}(z),\qquad z\in\Omega,
\end{equation}
with coefficients $C_l(n)\in\mathbb{C}$, $\kappa_l(n)\in\mathcal U$, converge as
\begin{equation}\label{Eq_phi_kappa_convergence}
\lim\limits_{n\rightarrow\infty}\sup\limits_{z\in\Omega}|F_n(z)-\varphi_\kappa(z)|e^{-C|z|}=0,
\end{equation}
for some $C\geq 0$, to some $\varphi_\kappa$ with $\kappa\in\mathcal{O}\setminus\mathcal{U}$, then the sequence of solutions of the Schrödinger equation converges as
\begin{equation}\label{Eq_Psi_kappa_convergence}
\lim\limits_{n\rightarrow\infty}\Psi(t,x;F_n)=\lim\limits_{n\rightarrow\infty}\sum\limits_{l=0}^nC_l(n)\Psi(t,x;\varphi_{\kappa_l(n)})=\Psi(t,x;\varphi_\kappa)
\end{equation}
uniformly on compact subsets of $[0,T)\times\mathbb{R}$.
\end{satz}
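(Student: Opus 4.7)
The plan is to recognise the statement as an immediate consequence of the continuous dependence result in Theorem~\ref{satz_Convergence_wavefunction}, once two preparatory checks are made: (a) the wave function $\Psi(t,x;F)$ is well defined for each of the admissible initial data $F=\varphi_\kappa$ and $F=F_n$, and (b) the map $F\mapsto \Psi(t,x;F)$ is linear, so that the identity $\Psi(t,x;F_n)=\sum_{l=0}^n C_l(n)\Psi(t,x;\varphi_{\kappa_l(n)})$ in \eqref{Eq_Psi_kappa_convergence} actually holds.

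First I would verify that all candidate initial conditions satisfy the hypotheses of Theorem~\ref{satz_Greensfunction}. By assumption each $\varphi_\kappa$ is holomorphic on $\Omega$ and satisfies the exponential bound \eqref{Eq_varphi_exponential_bound}, so Theorem~\ref{satz_Greensfunction} directly gives a well defined solution $\Psi(\,\cdot\,,\,\cdot\,;\varphi_\kappa)\in\AC_{1,2}((0,T)\times\mathbb R)$. For every fixed $n$ the finite linear combination $F_n=\sum_l C_l(n)\varphi_{\kappa_l(n)}$ inherits holomorphy on $\Omega$ and the bound
\[
|F_n(z)|\le\Bigl(\sum_{l=0}^n|C_l(n)|A(\kappa_l(n))\Bigr)e^{\max_l B(\kappa_l(n))|z|},\qquad z\in\Omega,
\]
so the same theorem applies to $F_n$.

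Next I would establish linearity. Since each $\Psi(t,x;\varphi_{\kappa_l(n)})$ is given by the absolutely convergent representation \eqref{Eq_Psi_absolute_convergent} along the rotated contour, the finite sum commutes with the integral and with the limit $\varepsilon\to 0^+$ in the definition \eqref{Eq_Psi}. This yields
\[
\Psi(t,x;F_n)=\sum_{l=0}^nC_l(n)\Psi(t,x;\varphi_{\kappa_l(n)}),\qquad t\in(0,T),\,x\in\mathbb R,
\]
which is exactly the first equality asserted in \eqref{Eq_Psi_kappa_convergence}.

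Finally I would invoke Theorem~\ref{satz_Convergence_wavefunction} with the sequence $(F_n)_n$ and the target $F=\varphi_\kappa$. The hypothesis \eqref{Eq_phi_kappa_convergence} is verbatim the condition \eqref{Eq_A1_convergence} of that theorem, so it delivers
\[
\lim_{n\to\infty}\Psi(t,x;F_n)=\Psi(t,x;\varphi_\kappa)
\]
uniformly on compact subsets of $[0,T)\times\mathbb R$, which, combined with the linearity identity above, gives \eqref{Eq_Psi_kappa_convergence}. The only possible obstacle is the justification of the linearity step, but since each summand admits the absolutely convergent Fresnel representation and the sum is finite, this is routine; all analytic work has already been absorbed into Theorems~\ref{satz_Greensfunction} and \ref{satz_Convergence_wavefunction}.
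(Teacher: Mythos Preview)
Your proposal is correct and follows essentially the same approach as the paper: both arguments reduce the theorem to Theorem~\ref{satz_Convergence_wavefunction} for the convergence $\Psi(t,x;F_n)\to\Psi(t,x;\varphi_\kappa)$ and to linearity of $F\mapsto\Psi(t,x;F)$ for the decomposition $\Psi(t,x;F_n)=\sum_l C_l(n)\Psi(t,x;\varphi_{\kappa_l(n)})$. Your write-up is in fact more careful than the paper's, since you explicitly verify the exponential bounds on $F_n$ needed to apply Theorems~\ref{satz_Greensfunction} and \ref{satz_Convergence_wavefunction}, and you justify linearity via the absolutely convergent Fresnel representation rather than simply invoking ``linearity of the Schr\"odinger equation''.
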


\begin{proof}[Proof of Theorem~\ref{satz_Supershift_property}]
The fact, that the convergence \eqref{Eq_phi_kappa_convergence} leads to the convergence \eqref{Eq_Psi_kappa_convergence}, was already proven in Theorem~\ref{satz_Convergence_wavefunction}. Moreover, splitting the solutions $\Psi(t,x;F_n)$ into the given linear combination is allowed due to the linearity of the Schrödinger equation with respect to the initial condition, i.e.
\begin{equation*}
\Psi(t,x;F_n)=\Psi\Big(t,x;\sum\limits_{l=0}^nC_l(n)\varphi_{\kappa_l(n)}\Big)=\sum\limits_{l=0}^nC_l(n)\Psi(t,x;\varphi_{\kappa_l(n)}).
\end{equation*}
\end{proof}

\begin{bem}\label{bem_Supershift}
Since the convergence \eqref{Eq_phi_kappa_convergence} implies uniform convergence on all compact subsets of $\Omega$, it is clear
that the initial conditions $(F_n)_n$ in \eqref{Eq_Initial_Fn} admit the supershift property of Definition~\ref{defi_Supershift} with respect to the metric space $X=\Omega$. Furthermore, with the metric space $X=[0,T)\times\mathbb{R}$ and the functions \eqref{phis} as $\phi_\kappa(t,x):=\Psi(t,x;\varphi_\kappa)$, we are again in the setting of Definition~\ref{defi_Supershift}. The convergence \eqref{Eq_Psi_kappa_convergence} shows that the sequence $(\Psi(t,x;F_n))_n$ admits a supershift with respect to the functions $\phi_\kappa$ in the metric space $[0,T)\times\mathbb{R}$.
\end{bem}

In the next result we continue the theme of Theorem~\ref{satz_Supershift_property} and return to the analyticity issue mentioned in Remark~\ref{analy}. In fact, the following Theorem~\ref{satz_Analyticity} shows that analyticity in the $\kappa$-variable in the initial condition implies analyticity in the $\kappa$-variable
in the wave function.

\begin{satz}\label{satz_Analyticity}
Let  $G:(0,T)\times\mathbb{R}\times\mathbb{R}\rightarrow\mathbb{C}$ and $\Omega$ be as in Assumption~\ref{ass_Greensfunction},
let $\mathcal O\subseteq\mathbb C$ be an open set, and consider a family of analytic functions $\varphi_\kappa:\Omega\rightarrow\mathbb{C}$ with $\kappa\in\mathcal{O}$ that satisfy the estimate
\begin{equation}\label{Eq_varphi_estimate}
|\varphi_\kappa(z)|\leq A(\kappa)e^{B(\kappa)|z|},\qquad z\in\Omega,
\end{equation}
for some $A(\kappa),B(\kappa)\geq 0$ continuously depending on $\kappa$, and let $\Psi(t,x;\varphi_\kappa)$ be the corresponding
solution of the Schrödinger equation. If for every $z\in\Omega$ the mapping
\begin{equation*}
\mathcal{O}\ni\kappa\mapsto\varphi_\kappa(z)
\end{equation*}
is holomorphic, then for every fixed $t\in(0,T)$, $x\in\mathbb{R}$, the mapping
\begin{equation*}
\mathcal{O}\ni\kappa\mapsto\Psi(t,x;\varphi_\kappa)
\end{equation*}
is holomorphic as well.
\end{satz}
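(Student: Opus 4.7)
The approach is to combine the Fresnel representation of the wave function established in Theorem~\ref{satz_Greensfunction} with Morera's theorem. Fix $t\in(0,T)$ and $x\in\mathbb{R}$ throughout, and set $H(\kappa)\coloneqq\Psi(t,x;\varphi_\kappa)$ for $\kappa\in\mathcal O$. Step~1 of the proof of Theorem~\ref{satz_Greensfunction} (see in particular \eqref{Eq_Psi_absolute_convergent}) provides the absolutely convergent representation
\begin{equation*}
H(\kappa)=e^{i\alpha}\int_\mathbb{R}G(t,x,ye^{i\alpha})\,\varphi_\kappa(ye^{i\alpha})\,dy.
\end{equation*}
The plan is to verify first that $H$ is continuous on $\mathcal O$ and second that $\oint_{\partial\Delta}H(\kappa)\,d\kappa=0$ for every closed triangle $\Delta\subset\mathcal O$; by Morera's theorem this yields the desired holomorphy.

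The central estimate comes from the decomposition \eqref{Eq_G_decomposition} together with the bounds \eqref{Eq_G_estimate} and \eqref{Eq_varphi_estimate}. A short computation (identical in spirit to the one leading to \eqref{Eq_Psi_absolute_convergent}) gives
\begin{equation*}
\bigl|G(t,x,ye^{i\alpha})\,\varphi_\kappa(ye^{i\alpha})\bigr|\leq A_0(t,x)\,A(\kappa)\,e^{-a(t)\sin(2\alpha)\,y^2+(B_0(t,x)+B(\kappa)+2a(t)|x|\sin(\alpha))|y|}.
\end{equation*}
Since $A(\cdot)$ and $B(\cdot)$ depend continuously on $\kappa$ by hypothesis, on any compact set $K\subset\mathcal O$ this furnishes a $\kappa$-uniform Gaussian-type majorant which is integrable in $y\in\mathbb R$. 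Combined with the (in fact holomorphic) pointwise continuity of $\kappa\mapsto\varphi_\kappa(ye^{i\alpha})$, dominated convergence gives the required continuity of $H$ on $\mathcal O$.

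For the Morera step, fix a closed triangle $\Delta\subset\mathcal O$. Applying the above majorant with $K=\partial\Delta$, the iterated integral of the absolute value over $\partial\Delta\times\mathbb R$ is finite, so Fubini's theorem justifies the interchange
\begin{equation*}
\oint_{\partial\Delta}H(\kappa)\,d\kappa=e^{i\alpha}\int_\mathbb{R}G(t,x,ye^{i\alpha})\Bigl(\oint_{\partial\Delta}\varphi_\kappa(ye^{i\alpha})\,d\kappa\Bigr)dy.
\end{equation*}
Since $\kappa\mapsto\varphi_\kappa(ye^{i\alpha})$ is holomorphic on $\mathcal O$ for every fixed $y\in\mathbb R$, Cauchy's theorem forces the inner contour integral to vanish identically, so the right-hand side is zero. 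Morera's theorem then shows $H$ is holomorphic on $\mathcal O$, which is the statement of the theorem.

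The main obstacle is producing a single majorant valid uniformly for all $\kappa$ in a compact subset of $\mathcal O$; this is precisely the point at which the continuous dependence of $A(\kappa)$ and $B(\kappa)$ on $\kappa$ is used, and without it both the dominated convergence and the Fubini step would be unavailable. Everything else is standard complex analysis, once the Fresnel representation of $\Psi$ from Theorem~\ref{satz_Greensfunction} is in hand.
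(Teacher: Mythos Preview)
Your argument is correct and follows essentially the same route as the paper: use the absolutely convergent Fresnel representation \eqref{Eq_Psi_absolute_convergent}, derive the Gaussian majorant from \eqref{Eq_G_decomposition}, \eqref{Eq_G_estimate} and \eqref{Eq_varphi_estimate}, exploit the continuity of $A(\kappa),B(\kappa)$ to make the bound uniform on the compact triangle, swap the integrals by Fubini, and conclude via Cauchy's theorem and Morera. If anything, you are slightly more careful than the paper in explicitly verifying the continuity of $H$ before invoking Morera.
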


\begin{proof}
Fix $t\in(0,T)$, $x\in\mathbb{R}$. Then for any triangle $\Delta\subseteq\mathcal{O}$, we have the path integral
\begin{equation}\label{Eq_Psi_kappa2}
\int_\Delta\Psi(t,x;\varphi_\kappa)d\kappa=\int_\Delta e^{i\alpha}\int_\mathbb{R}G(t,x,ye^{i\alpha})\varphi_\kappa(ye^{i\alpha})dyd\kappa,
\end{equation}
due to the representation \eqref{Eq_Psi_absolute_convergent} of the wave function. Here $\alpha\in(0,\frac{\pi}{2})$ is the angle of the double sector $S_\alpha$ in Assumption~\ref{ass_Greensfunction}. In order to interchange the order of integration, we have to prove absolute integrability of the double integral. Firstly, the estimate
\begin{equation}\label{Eq_Psi_kappa1}
|G(t,x,ye^{i\alpha})\varphi_\kappa(ye^{i\alpha})|\leq A(\kappa)A_0(t,x)e^{-a(t)\sin(2\alpha)y^2}e^{(B(\kappa)+B_0(t,x)+2|x|a(t)\sin(\alpha))|y|}
\end{equation}
follows from \eqref{Eq_G_decomposition}, \eqref{Eq_G_estimate} and \eqref{Eq_varphi_estimate} and shows that the $y$-integral is absolutely convergent. Moreover, the coefficients $A(\kappa),B(\kappa)$ are assumed to be continuous and hence this upper bound can be uniformly estimated on the compact triangle $\Delta$. This means, that the right hand side of \eqref{Eq_Psi_kappa1} can be estimated by some $\kappa$-independent and $y$-integrable upper bound. Hence, the double integral \eqref{Eq_Psi_kappa2} is absolutely convergent and we are allowed to interchange the order of integration and get
\begin{equation*}
\int_\Delta\Psi(t,x;\varphi_\kappa)d\kappa=e^{i\alpha}\int_\mathbb{R}G(t,x,ye^{i\alpha})\int_\Delta\varphi_\kappa(ye^{i\alpha})d\kappa dy.
\end{equation*}
Since the mapping $\mathcal O\ni\kappa\mapsto\varphi_\kappa(ye^{i\alpha})$ is holomorphic the path integral along $\Delta$ vanishes and we get
\begin{equation*}
\int_\Delta\Psi(t,x;\varphi_\kappa)d\kappa=0.
\end{equation*}
Due to the Theorem of Morera this implies analyticity of $\mathcal{O}\ni\kappa\mapsto\Psi(t,x;\varphi_\kappa)$.
\end{proof}

In order to appreciate our main results, the following corollary shows how the above Theorem~\ref{satz_Supershift_property} and Theorem~\ref{satz_Analyticity} combine in the special case of the exponentials $\varphi_\kappa(z)=e^{i\kappa z}$ from Example~\ref{bsp_varphi_exponentials}, which are also the basic functions of superoscillations in Remark~\ref{superbem}.

\begin{cor}\label{cor_Exponentials}
Let $G:(0,T)\times\mathbb{R}\times\mathbb{R}\rightarrow\mathbb{C}$ be as in Assumption~\ref{ass_Greensfunction} and
consider the exponentials $\varphi_\kappa(z)=e^{i\kappa z}$, $\kappa,z\in\mathbb{C}$, as in Example~\ref{bsp_varphi_exponentials}. If for any $\kappa\in\mathbb{C}\setminus[-1,1]$ we choose the coefficients $C_l(n)$ and $\kappa_l(n)$ as in \eqref{Eq_Coefficients}, then the sequence of solutions $(\Psi(t,x;F_n))_n$ of the Schrödinger equation with initial condition
\begin{equation}\label{Eq_Initial_exponentials}
F_n(z)=\sum\limits_{k=0}^nC_l(n)e^{i\kappa_l(n)z},\qquad z\in\mathbb{C},
\end{equation}
converges as
\begin{equation}\label{Eq_Psi_kappa_convergencecor}
\lim\limits_{n\rightarrow\infty}\Psi(t,x;F_n)=\lim\limits_{n\rightarrow\infty}\sum\limits_{l=0}^nC_l(n)\Psi(t,x;e^{i\kappa_l(n)\,\cdot\,})=\Psi(t,x;e^{i\kappa\,\cdot\,})
\end{equation}
uniformly on compact subsets of $[0,T)\times\mathbb{R}$, that is, $(\Psi(t,x;F_n))_n$ admits a supershift. Moreover, for every fixed $t\in(0,T)$, $x\in\mathbb{R}$, the mapping
\begin{equation*}
\mathbb{C}\ni\kappa\mapsto\Psi(t,x;e^{i\kappa\,\cdot\,})
\end{equation*}
is analytic.
\end{cor}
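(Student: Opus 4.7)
The plan is to read Corollary~\ref{cor_Exponentials} as a direct specialization of Theorem~\ref{satz_Supershift_property} and Theorem~\ref{satz_Analyticity} to the family $\varphi_\kappa(z)=e^{i\kappa z}$, so the proof is mostly a verification of hypotheses. I would first fix $\mathcal{O}=\mathbb{C}$ and $\mathcal{U}=[-1,1]$ as in Example~\ref{bsp_varphi_exponentials}, and check the exponential bound \eqref{Eq_varphi_exponential_bound}: since $|e^{i\kappa z}|\leq e^{|\kappa||z|}$ for all $z\in\mathbb{C}$, the choice $A(\kappa)=1$ and $B(\kappa)=|\kappa|$ works, and both coefficients depend continuously on $\kappa$. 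Also, for every fixed $z$ the map $\kappa\mapsto e^{i\kappa z}$ is entire, which handles the analyticity hypothesis of Theorem~\ref{satz_Analyticity}.

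Next I would invoke the convergence \eqref{Eq_Superoscillations_convergence} from Example~\ref{bsp_varphi_exponentials}, which asserts precisely that
\begin{equation*}
\lim_{n\to\infty}\sup_{z\in\mathbb{C}}|F_n(z)-e^{i\kappa z}|e^{-C|z|}=0
\end{equation*}
for some $C\geq 0$. This is exactly hypothesis \eqref{Eq_phi_kappa_convergence} of Theorem~\ref{satz_Supershift_property} (note that $\Omega\subseteq\mathbb{C}$, so taking the supremum over $\Omega$ is weaker than taking it over $\mathbb{C}$). Applying Theorem~\ref{satz_Supershift_property} directly then yields
\begin{equation*}
\lim_{n\to\infty}\Psi(t,x;F_n)=\lim_{n\to\infty}\sum_{l=0}^nC_l(n)\Psi(t,x;e^{i\kappa_l(n)\,\cdot\,})=\Psi(t,x;e^{i\kappa\,\cdot\,})
\end{equation*}
uniformly on compact subsets of $[0,T)\times\mathbb{R}$, which is precisely \eqref{Eq_Psi_kappa_convergencecor}. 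The interpretation as a supershift follows immediately since $\kappa\in\mathbb{C}\setminus[-1,1]=\mathcal{O}\setminus\mathcal{U}$.

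For the final analyticity statement, I would simply appeal to Theorem~\ref{satz_Analyticity} with $\mathcal{O}=\mathbb{C}$: the bound \eqref{Eq_varphi_estimate} and the pointwise holomorphy of $\kappa\mapsto e^{i\kappa z}$ have already been verified, so the theorem delivers analyticity of $\mathbb{C}\ni\kappa\mapsto\Psi(t,x;e^{i\kappa\,\cdot\,})$ for each fixed $t\in(0,T)$ and $x\in\mathbb{R}$. There is no real obstacle here — the only subtlety is confirming that the exponential growth constants $A(\kappa),B(\kappa)$ can indeed be taken continuously in $\kappa$ (needed for the interchange-of-integration argument in the proof of Theorem~\ref{satz_Analyticity}), but this is transparent for the specific choice $A(\kappa)=1$, $B(\kappa)=|\kappa|$. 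Thus the corollary reduces to a one-paragraph bookkeeping argument once the estimates and convergence from Example~\ref{bsp_varphi_exponentials} are invoked.
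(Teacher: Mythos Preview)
Your proposal is correct and follows essentially the same approach as the paper: both invoke the convergence \eqref{Eq_Superoscillations_convergence} from Example~\ref{bsp_varphi_exponentials} to feed Theorem~\ref{satz_Supershift_property}, and then appeal to Theorem~\ref{satz_Analyticity} for the analyticity claim. Your write-up is slightly more explicit in verifying the exponential bound $|e^{i\kappa z}|\leq e^{|\kappa||z|}$ with continuous coefficients $A(\kappa)=1$, $B(\kappa)=|\kappa|$, but otherwise the arguments coincide.
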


\begin{proof}
According to \eqref{Eq_Superoscillations_convergence} the initial conditions $(F_n)_n$ converge as
\begin{equation*}
\lim\limits_{n\rightarrow\infty}\sup\limits_{z\in\mathbb{C}}|F_n(z)-e^{i\kappa z}|e^{-C|z|}=0,
\end{equation*}
for some $C\geq 0$. Hence it follows from Theorem~\ref{satz_Supershift_property}, that the sequence of solutions $(\Psi(t,x;F_n))_n$ converges as
\eqref{Eq_Psi_kappa_convergencecor}
uniformly on compact subsets of $[0,T)\times\mathbb{R}$.
Moreover, since the mapping $\mathbb{C}\ni\kappa\mapsto e^{i\kappa z}$ is analytic for every fixed $z\in\mathbb{C}$, it follows
from Theorem~\ref{satz_Analyticity} that also the mapping
\begin{equation*}
\mathbb{C}\ni\kappa\mapsto\Psi(t,x;e^{i\kappa\,\cdot\,})
\end{equation*}
is analytic for every fixed $t\in(0,T)$, $x\in\mathbb{R}$.
\end{proof}

\section{Some applications of the main results}\label{EXAMP}

We are now in the position to apply the results of the previous sections to the time dependent Schr\"{o}dinger equation with specific potentials. Here we consider the case of the free particle, the time dependent uniform electric field, the time dependent harmonic oscillator, and the Pöschl-Teller potential. Especially we formulate a variant of Theorem~\ref{satz_Supershift_property}. The proof of Theorem~\ref{aayy} is direct, in fact, for each potential we derive
the Green's function and verify the Assumption~\ref{ass_Greensfunction}.
From our discussion below it is also immediate that for each of the following potentials (I)--(IV), the representation of the wave function in Theorem~\ref{satz_Greensfunction}, the continuous dependency on the initial value in Theorem~\ref{satz_Convergence_wavefunction}, and the analyticity property in Theorem~\ref{satz_Analyticity} hold.

We also refer the reader to \cite{acsst3} for the free particle,
\cite{ACSST17} for the constant electric field and \cite{uno,harmonic,YGERFn} for the harmonic oscillator that are included in our general setting, while the  Pöschl-Teller potential is treated here for the first time. The difference with respect to the previous literature is that the initial condition we assume here is not necessarily a superoscillatory function but a supershift.

\begin{satz}\label{aayy}
Consider the time dependent Schr\"{o}dinger equation
\begin{subequations}\label{Eq_Schroedinger_app}
\begin{align}
i\frac{\partial}{\partial t}\Psi(t,x)&=\Big(-\frac{\partial^2}{\partial x^2}+V(t,x)\Big)\Psi(t,x), & \text{for a.e. }t\in(0,T),\,x\in\mathbb{R}, \label{Eq_Schroedinger_app1}\\
\lim\limits_{t\rightarrow 0^+}\Psi(t,x)&=F_n(x), & x\in\mathbb{R}, \label{Eq_Schroedinger_app2}
\end{align}
\end{subequations}
assume that the potential $V$ in \eqref{Eq_Schroedinger_app1} is one of the following:
\begin{enumerate}
\item[{\rm (I)}] $V(t,x)=0$,
\item[{\rm (II)}] $V(t,x)=\lambda(t)x$ with $\lambda:[0,\infty)\rightarrow\mathbb{R}$ continuous,
\item[{\rm (III)}] $V(t,x)=\lambda(t)x^2$ with $\lambda:[0,\infty)\rightarrow\mathbb{R}$ continuous,
\item[{\rm (IV)}] $V(t,x)=-\frac{l(l+1)}{\cosh^2(x)}$ for some $l\in\mathbb{N}$.
\end{enumerate}
Moreover, with $\mathcal{O},\mathcal{U}\subseteq\mathbb{C}$ such that $\mathcal{U}\subsetneqq\mathcal{O}$, and $\Omega\subseteq\mathbb{C}$ some open set containing the double sector $S_\alpha$ from \eqref{Eq_Doublesector-} for some $\alpha\in(0,\frac{\pi}{2})$, we consider a family of analytic functions $\varphi_\kappa:\Omega\rightarrow\mathbb{C}$, $\kappa\in\mathcal{O}$, and initial conditions of the form
\begin{equation}\label{Eq_Initial_Fn2}
F_n(z)=\sum\limits_{l=0}^nC_l(n)\varphi_{\kappa_l(n)}(z),\qquad z\in\Omega,
\end{equation}
where $C_l(n)\in\mathbb{C}$, $\kappa_l(n)\in\mathcal{U}$. If we assume that
\begin{equation}\label{Eq_phi_kappa_convergence_app}
\lim\limits_{n\rightarrow\infty}\sup\limits_{z\in\Omega}|F_n(z)-\varphi_\kappa(z)|e^{-C|z|}=0,
\end{equation}
for some $C\geq 0$ and $\varphi_\kappa$ with $\kappa\in\mathcal{O}\setminus\mathcal{U}$, then there exists $T\in(0,\infty]$ such that the sequence of solutions of \eqref{Eq_Schroedinger_app} converge as
\begin{equation}\label{Eq_Psi_convergence_app}
\lim\limits_{n\rightarrow\infty}\Psi(t,x;F_n)=\lim\limits_{n\rightarrow\infty}\sum\limits_{l=0}^nC_l(n)\Psi(t,x;\varphi_{\kappa_l(n)})=\Psi(t,x;\varphi_\kappa)
\end{equation}
uniformly on compact subsets of $[0,T)\times\mathbb{R}$.
\end{satz}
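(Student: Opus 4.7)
The strategy is to read Theorem~\ref{aayy} as a direct application of Theorem~\ref{satz_Supershift_property}. Since the hypotheses on the family $(F_n)_n$, the limit $\varphi_\kappa$, and the convergence \eqref{Eq_phi_kappa_convergence_app} are precisely those of Theorem~\ref{satz_Supershift_property}, the only task is to exhibit, for each of the potentials (I)--(IV), a time horizon $T\in(0,\infty]$, a complex domain $\Omega\supseteq S_\alpha$ for some $\alpha\in(0,\frac{\pi}{2})$, and a Green's function $G$ satisfying all four items of Assumption~\ref{ass_Greensfunction}. Once these verifications are in place, \eqref{Eq_Psi_convergence_app} is immediate.

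For cases (I), (II), (III) I would use closed-form Gaussian Green's functions that are largely available in the previous literature. In (I), the kernel $G(t,x,y)=(4\pi it)^{-1/2}\exp(i(x-y)^2/(4t))$ extends holomorphically to $\Omega=\mathbb{C}$, and one takes $T=\infty$, $a(t)=\frac{1}{4t}$, and $\widetilde G(t,x,z)=(4\pi it)^{-1/2}$ independent of $z$; items (i)--(iv) of Assumption~\ref{ass_Greensfunction} are then direct. For (II), the standard ansatz $G=(4\pi it)^{-1/2}\exp(i\phi(t,x,y))$ with $\phi$ polynomial of degree two in $y$ leads, when inserted into \eqref{Eq_Schroedinger_app1}, to a system of ODEs whose coefficients are explicit time integrals of $\lambda$; absorbing the pure $y^2$-term into $a(t)(z-x)^2$ leaves a $\widetilde G$ which is entire in $z$ and exponential of a linear function, so Assumption~\ref{ass_Greensfunction} holds on $\Omega=\mathbb{C}$. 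For (III) one uses a Mehler-type representation built from a fundamental pair of solutions of $\ddot u+2\lambda(t)u=0$; here $T$ must be chosen below the first positive zero of the associated classical action denominator $s(t)$, so that $a(t)\sim\frac{1}{4s(t)}$ is strictly positive on $(0,T)$ and blows up at $t=0$. Beyond this choice of $T$, the four items of Assumption~\ref{ass_Greensfunction} reduce in each of these three cases to direct, if tedious, computations.

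The genuinely new case is the P\"oschl-Teller potential (IV). For $l\in\mathbb{N}$ the operator $H_l=-\partial_x^2-l(l+1)/\cosh^2x$ is Darboux-intertwined with $H_{l-1}$ via $A_l=\partial_x+l\tanh x$: a short computation shows $A_l^\ast A_l=H_l+l^2$ and $A_l A_l^\ast=H_{l-1}+l^2$, whence $A_l H_l=H_{l-1}A_l$. Iterating yields $\mathcal{A}_l H_l=H_0\mathcal{A}_l$ with $\mathcal{A}_l=A_1A_2\cdots A_l$, and therefore a representation of $e^{-itH_l}$ in terms of the free propagator on the continuous spectrum, plus a finite sum over the $l$ bound states of $H_l$ whose eigenfunctions are explicit products of $\tanh x$ and powers of $\mathrm{sech}\,x$. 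Passing to integral kernels gives $G_l(t,x,y)$ as a differential polynomial in $\tanh x$ and $\tanh y$ applied to the free kernel $G_0(t,x,y)$, plus a finite bound-state sum. Since $\tanh z$ is holomorphic on $\mathbb{C}\setminus\{i\pi(k+\frac{1}{2}):k\in\mathbb{Z}\}$, any sufficiently small $\alpha\in(0,\frac{\pi}{2})$ places $S_\alpha$ inside a pole-free open neighbourhood $\Omega$; on such $\Omega$ the $\tanh$ and $\mathrm{sech}$ factors are bounded, so the decomposition \eqref{Eq_G_decomposition} inherited from $G_0$ persists with the same $a(t)=\frac{1}{4t}$ and a modified $\widetilde G_l$ that still satisfies (ii)--(iv).

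The principal obstacle is (IV): one has to verify that the $\tanh$ and $\mathrm{sech}$ coefficients appearing in the Darboux-transformed kernel grow at most exponentially along rays in $S_\alpha$ of arbitrarily large modulus, and that the bound-state remainder, although not of Gaussian shape, does not spoil the required decomposition \eqref{Eq_G_decomposition} (its eigenfunctions decay exponentially in $|\Re z|$, so the contour-rotation argument of Corollary~\ref{cor_Fresnel_integral_R} still applies for small enough $\alpha$). Item (iii), the initial-condition limit, then follows because only the free part contributes to the delta-like behaviour at $t=0^+$; item (i) is a consequence of the intertwining identity, and item (iv) from direct differentiation of the explicit formula. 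Once Assumption~\ref{ass_Greensfunction} has been verified in each of (I)--(IV), Theorem~\ref{satz_Supershift_property} yields \eqref{Eq_Psi_convergence_app} uniformly on compact subsets of $[0,T)\times\mathbb{R}$, completing the proof of Theorem~\ref{aayy}.
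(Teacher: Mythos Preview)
Your overall strategy---reduce Theorem~\ref{aayy} to Theorem~\ref{satz_Supershift_property} by verifying Assumption~\ref{ass_Greensfunction} for each potential---is exactly the paper's approach, and your treatment of cases (I)--(III) matches the paper up to inessential normalisation (e.g.\ in (III) the paper uses $\alpha''=-4\lambda\alpha$ rather than $\ddot u+2\lambda u=0$ and takes $a(t)=\beta(t)/(4\alpha(t))$, but the structure is the same).

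The gap is in case (IV). Your Darboux/intertwining construction naturally splits $G_l$ into a continuous-spectrum part (differential operators in $x,y$ applied to the free kernel $G_0$, hence sharing the Gaussian prefactor $e^{i(z-x)^2/(4t)}$) and a bound-state remainder $\sum_{m=1}^l c_m\,e^{im^2t}Q_l^m(x)Q_l^m(z)$ which carries \emph{no} Gaussian factor. To force this piece into the decomposition \eqref{Eq_G_decomposition} one must multiply it by $e^{-ia(t)(z-x)^2}$; along the rotated ray $z=ye^{i\alpha}$ that factor has modulus $e^{a(t)\sin(2\alpha)\,y^2+\mathcal{O}(y)}$, i.e.\ genuine Gaussian growth, which destroys the linear exponential bound \eqref{Eq_G_estimate} required in Assumption~\ref{ass_Greensfunction}\,(ii). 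Your fallback---treat the bound-state integral directly because $Q_l^m(z)$ decays like $e^{-m|\Re z|}$---only makes $\int Q_l^m(y)F(y)\,dy$ absolutely convergent when the growth exponent $B$ of $F$ satisfies $B<m$; for $m=1$ this forces $B<1$, whereas Theorem~\ref{aayy} (via Theorem~\ref{satz_Supershift_property}) must accommodate arbitrary $B\ge 0$. So the argument as written does not prove the full statement.

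The paper avoids this obstacle by quoting an explicit closed form for $G_l$ (Grosche--Steiner) in which the Gaussian $e^{i(z-x)^2/(4t)}$ is factored out of the \emph{entire} kernel: the resulting $\widetilde G$ is a finite sum of terms $Q_l^m(x)Q_l^m(z)\,R(m^2t,m(z-x))$ with $R(t,w)=e^{w}\Lambda\big(\tfrac{w}{2\sqrt{it}}-\sqrt{it}\big)-e^{-w}\Lambda\big(\tfrac{w}{2\sqrt{it}}+\sqrt{it}\big)$ and $\Lambda(w)=e^{w^2}\erfc(w)$. The decisive estimate is $|R(t,w)|\le 2\Lambda(-\sqrt{t/2})\,e^{|\Re w|}$, which is \emph{linear} exponential in $w$; this is precisely what lets Assumption~\ref{ass_Greensfunction}\,(ii) hold with no restriction on $B$. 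In effect, the error-function structure of $R$ repackages the bound-state contributions so that \eqref{Eq_G_decomposition} is globally valid---something your naive continuous/bound-state splitting cannot deliver without this additional identity.
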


We point out that Theorem \ref{aayy} shows (similarly as Theorem \ref{satz_Supershift_property} and Remark \ref{bem_Supershift}) that for the potentials (I) -- (IV) the supershift property of the initial datum $(F_n)_n$, with the stronger convergence \eqref{Eq_phi_kappa_convergence_app}, carries over to a supershift property of the solutions $(\Psi(t,x;F_n))_n$. Indeed, the convergence \eqref{Eq_phi_kappa_convergence_app} implies the uniform convergence on all compact subsets of $\Omega$. Hence, $(F_n)_n$ admits a supershift according to Definition \ref{defi_Supershift} in the metric space $X=\Omega$ with respect to the functions $\varphi_\kappa$. Furthermore, the convergence \eqref{Eq_Psi_convergence_app} shows the supershift property of the wave functions $(\Psi(t,x;F_n))_n$ in the metric space $X=[0,T)\times\mathbb{R}$
with respect to the functions $\phi_\kappa(t,x):=\Psi(t,x;\varphi_\kappa)$.

\medskip

For the proof of Theorem~\ref{aayy}, the Green's functions for the respective potentials (I)--(IV) are investigated in the following paragraphs.
The first potential is the free particle, where it is almost obvious that Assumption~\ref{ass_Greensfunction} is satisfied.

\medskip

\subsection*{(I) Free particle $V(t,x)=0$} We show that in this case the Green's function
\begin{equation}\label{Eq_Gfree}
G(t,x,y)=\frac{1}{2\sqrt{i\pi t}}e^{-\frac{(y-x)^2}{4it}},\qquad t\in(0,\infty),\,x,y\in\mathbb{R},
\end{equation}
satisfies Assumption~\ref{ass_Greensfunction} with $T=\infty$. First of all it is clear, that for every $t\in(0,\infty)$ and $x\in\mathbb{R}$ we can extend
$G(t,x,\,\cdot\,)$ to an entire function by simply replacing $y\rightarrow z$. A direct computation shows that $G$ satisfies the differential equation \eqref{Eq_Schroedinger_G}. Moreover, the Green's function admits the decomposition \eqref{Eq_G_decomposition} with
\begin{equation*}
a(t,x)=\frac{1}{4t}\quad\text{and}\quad\widetilde{G}(t,x,z)=\frac{1}{2\sqrt{i\pi t}}.
\end{equation*}
The bound \eqref{Eq_G_estimate} and the extension properties \eqref{Eq_Coefficient_limits} are satisfied with the choices
\begin{equation*}
A_0(t,x)=\frac{1}{2\sqrt{\pi t}}\quad\text{and}\quad B_0(t,x)=0.
\end{equation*}
It is also clear that the limit condition \eqref{Eq_G_initial_condition} holds and the estimates for the derivatives in \eqref{Eq_G_derivative_estimate} follow immediately from the explicit form of $\widetilde{G}$.

\subsection*{(II) Time dependent uniform electric field $V(t,x)=\lambda(t)x$} It will be assumed that $\lambda:[0,\infty)\rightarrow\mathbb{R}$ is continuous. This type of
potential was already investigated with respect to the time persistence of superoscillations; cf. \cite[Theorem~3.6]{ACSST17}. In the present setting the Green's function is of the form
\begin{equation}\label{Eq_Greensfunction_Electric_field}
G(t,x,y)=\frac{1}{2\sqrt{i\pi t}}e^{i\beta(t)+ixt\alpha'(t)+iy\alpha(t)-\frac{(y-x)^2}{4it}},\qquad t>0,\,x,y\in\mathbb{R},
\end{equation}
where the coefficients $\alpha,\beta:[0,\infty)\rightarrow\mathbb{R}$ are the solutions of the ordinary differential equations
\begin{equation}\label{Eq_Coefficients_Electric_field}
t\alpha''(t)+2\alpha'(t)=-\lambda(t)\quad\text{and}\quad\beta'(t)=-t^2\alpha'(t)^2,\qquad t>0,
\end{equation}
with initial conditions $\alpha(0)=\beta(0)=\lim_{t\rightarrow 0^+}t\alpha'(t)=0$. It will be shown that $G$ satisfies
Assumption~\ref{ass_Greensfunction} with $T=\infty$. As above, it is clear that for $t\in(0,\infty)$ and $x\in\mathbb{R}$ one
can extend $G(t,x,\,\cdot\,)$ to an entire function by simply replacing $y\rightarrow z$. Differentiation and the use of the differential equations \eqref{Eq_Coefficients_Electric_field} show that $G$ is indeed a solution of \eqref{Eq_Schroedinger_G}.
Moreover, the Green's function admits the decomposition \eqref{Eq_G_decomposition} with
\begin{equation*}
a(t,x)=\frac{1}{4t}\quad\text{and}\quad\widetilde{G}(t,x,z)=\frac{1}{2\sqrt{i\pi t}}e^{i\beta(t)+ixt\alpha'(t)+iz\alpha(t)}.
\end{equation*}
It follows that the bound \eqref{Eq_G_estimate} and the extension properties \eqref{Eq_Coefficient_limits} are satisfied with the choices
\begin{equation*}
A_0(t,x)=\frac{1}{2\sqrt{\pi t}}\quad\text{and}\quad B_0(t,x)=|\alpha(t)|.
\end{equation*}
Using the initial conditions of the coefficients $\alpha$ and $\beta$ one easily verifies that the
limit condition \eqref{Eq_G_initial_condition} holds. Finally, when computing
the derivatives $\frac{\partial}{\partial x}\widetilde{G}$, $\frac{\partial^2}{\partial x^2}\widetilde{G}$, $\frac{\partial}{\partial t}\widetilde{G}$
one obtains
functions of the form
\begin{equation*}
P(t,x,z)e^{i\beta(t)+ixt\alpha'(t)+iz\alpha(t)},
\end{equation*}
where $P(t,x,z)$ is continuous in $t$ and $x$, and a polynomial in the $z$-variable. In this form it is not difficult to see that the derivatives can be estimated as in \eqref{Eq_G_derivative_estimate}.

\subsection*{(III) Time dependent harmonic oscillator $V(t,x)=\lambda(t)x^2$}  It will be assumed that $\lambda:[0,\infty)\rightarrow\mathbb{R}$ is continuous. In contrast to
the potentials (I) and (II), it turns out that the expression for the Green's function for the harmonic oscillator may only be valid on a finite time interval $(0,T)$. It is of the form
\begin{equation}\label{Eq_Greensfunction_Harmonic_oscillator}
G(t,x,y)=\frac{1}{2\sqrt{i\pi\alpha(t)}}e^{-\frac{\alpha'(t)x^2-2xy+\beta(t)y^2}{4i\alpha(t)}},\qquad t\in(0,T),\,x,y\in\mathbb{R},
\end{equation}
where the coefficients $\alpha,\beta$ are the solutions of the ordinary differential equations
\begin{equation}\label{Eq_Coefficients_Harmonic_oscillator}
\alpha''(t)=-4\lambda(t)\alpha(t)\qquad\text{and}\qquad\beta''(t)=-4\lambda(t)\beta(t),
\end{equation}
with initial conditions $\alpha(0)=\beta'(0)=0$ and $\beta(0)=\alpha'(0)=1$.
It will be shown that $G$ satisfies
Assumption~\ref{ass_Greensfunction} with $T>0$ chosen as the smallest positive zero of $\alpha$, or $T=\infty$ if $\alpha$ has no positive zero. With this choice of $T$ it follows from the initial conditions that $\alpha(t)>0$ for $t\in (0,T)$.
Again it is clear that for every $t\in(0,T)$ and $x\in\mathbb{R}$ one can extend $G(t,x,\,\cdot\,)$ to an entire function by simply replacing $y\rightarrow z$.
Note, that $\alpha$ and $\beta$ are linearly independent solutions of the
differential equation \eqref{Eq_Coefficients_Harmonic_oscillator} and hence the Wronskian has the constant value
\begin{equation*}
\alpha'(t)\beta(t)-\alpha(t)\beta'(t)=1,\qquad t>0.
\end{equation*}
Using this and \eqref{Eq_Coefficients_Harmonic_oscillator} one verifies by a straightforward computation that $G$ is a solution of
\eqref{Eq_Schroedinger_G}.
Moreover, the Green's function admits the decomposition \eqref{Eq_G_decomposition} with
\begin{equation*}
a(t,x)=\frac{\beta(t)}{4\alpha(t)}\quad\text{and}\quad\widetilde{G}(t,x,z)=\frac{1}{2\sqrt{i\pi\alpha(t)}}e^{\frac{(\beta(t)-\alpha'(t))x^2+2xz(1-\beta(t))}{4i\alpha(t)}}.
\end{equation*}
The bound \eqref{Eq_G_estimate} is satisfied with the choices
\begin{equation*}
A_0(t,x)=\frac{1}{2\sqrt{\pi\alpha(t)}}\quad\text{and}\quad B_0(t,x)=\frac{|x||1-\beta(t)|}{2\alpha(t)}.
\end{equation*}
Using $\beta(0)=1$ and
\begin{equation}\label{Eq_Limit}
\lim\limits_{t\rightarrow 0^+}\frac{1-\beta(t)}{\alpha(t)}=\lim\limits_{t\rightarrow 0^+}\frac{-\beta'(t)}{\alpha'(t)}=\frac{0}{1}=0,
\end{equation}
it follows that the extension properties \eqref{Eq_Coefficient_limits} hold. Using $\beta(0)=1$, the limit \eqref{Eq_Limit}, as well as
\begin{equation*}
\lim\limits_{t\rightarrow 0^+}\frac{\beta(t)-\alpha'(t)}{\alpha(t)}=\lim\limits_{t\rightarrow 0^+}\frac{\beta'(t)-\alpha''(t)}{\alpha'(t)}=\lim\limits_{t\rightarrow 0^+}\frac{\beta'(t)+4\lambda(t)\alpha(t)}{\alpha'(t)}=0,
\end{equation*}
one verifies that the limit condition \eqref{Eq_G_initial_condition} is satisfied. Finally, when computing
the derivatives $\frac{\partial}{\partial x}\widetilde{G}$, $\frac{\partial^2}{\partial x^2}\widetilde{G}$, $\frac{\partial}{\partial t}\widetilde{G}$ one
obtains functions of the form
\begin{equation*}
P(t,x,z)e^{\frac{(\beta(t)-\alpha'(t))x^2+2xz(1-\beta(t))}{4i\alpha(t)}},
\end{equation*}
where $P(t,x,z)$ is continuous in $t$ and $x$, and a polynomial in the $z$-variable.
In this form it is not difficult to see that the derivatives can be estimated as in \eqref{Eq_G_derivative_estimate}.

\begin{bem}
For the special case of a time-independent harmonic oscillator, this potential was already investigated with respect to the evolution of superoscillations in \cite[Proposition~5.2]{YGERFn}. The above considerations can be viewed as a time dependent generalization of the earlier results. Note, that in the particular situation $V(t,x)=\omega^2x^2$ with $\omega>0$ the Green's function \eqref{Eq_Greensfunction_Harmonic_oscillator} reduces to
\begin{equation*}
G(t,x,y)=\frac{\sqrt{\omega}}{\sqrt{2i\pi\sin(2\omega t)}}e^{-\frac{\omega(y-x)^2}{2i\tan(2\omega t)}-i\omega xy\tan(\omega t)},\qquad t\in\Big(0,\frac{\pi}{2\omega}\Big),\,x,y\in\mathbb{R},
\end{equation*}
and for $V(t,x)=-\omega^2x^2$ with $\omega>0$ the Green's function \eqref{Eq_Greensfunction_Harmonic_oscillator} becomes
\begin{equation*}
G(t,x,y)=\frac{\sqrt{\omega}}{\sqrt{2i\pi\sinh(2\omega t)}}e^{-\frac{\omega(y-x)^2}{2i\tanh(2\omega t)}+i\omega xy\tanh(\omega t)},\qquad t\in(0,\infty),\,x,y\in\mathbb{R}.
\end{equation*}
\end{bem}

\subsection*{(IV) Pöschl-Teller potential $V(t,x)=-\frac{l(l+1)}{\cosh^2(x)}$, $l\in\mathbb{N}$}
For the Pöschl-Teller potential it turns out that the Green's function cannot be extended to the whole complex plane, due to singularities on the imaginary axis.
For $x,y\in\mathbb{R}$ and $t>0$ the Green's function is given by
\begin{equation}\label{Eq_G_Poeschl_Teller}
G(t,x,y)=\bigg(\frac{1}{2\sqrt{i\pi t}}+\sum\limits_{m=1}^l\frac{m(l-m)!}{2(l+m)!}Q_l^m(x)Q_l^m(y)R\big(m^2t,m(y-x)\big)\bigg)e^{-\frac{(y-x)^2}{4it}}.
\end{equation}
where we use the function
\begin{equation}\label{Eq_R}
R(t,z)\coloneqq e^z\Lambda\Big(\frac{z}{2\sqrt{it}}-\sqrt{it}\Big)-e^{-z}\Lambda\Big(\frac{z}{2\sqrt{it}}+\sqrt{it}\Big),\qquad t>0,\,z\in\mathbb{C},
\end{equation}
with $\Lambda(z)\coloneqq e^{z^2}(1-\erf(z))$ a modification of the error function, $Q_l^m(x)\coloneqq P_l^m(\tanh(x))$ and $P_l^m$ the associated Legendre polynomials. This Green's function can for example be found in \cite[Section 6.6.3]{GS98}. It follows from the Legendre differential equation \cite[Eq. 8.1.1]{AbSt72} that $Q_l^m$ satisfies
\begin{equation}\label{Eq_Q_Differential_equation}
(Q_l^m)''(x)+\Big(\frac{l(l+1)}{\cosh^2(x)}-m^2\Big)Q_l^m(x)=0,\qquad x\in\mathbb{R}.
\end{equation}
Due to the representations \cite[Equations 8.6.6 \& 8.6.18]{AbSt72}, we know that the associated Legendre polynomials are of the form
\begin{equation*}
P_l^m(\xi)=(1-\xi^2)^{\frac{m}{2}}\Big(\text{polynomial in }\xi\Big),\qquad\xi\in(-1,1).
\end{equation*}
From this it follows that also $Q_l^m$ is of the form
\begin{equation}\label{Eq_Qlm_explicit}
Q_l^m(x)=\frac{1}{\cosh^m(x)}\Big(\text{polynomial in }\tanh(x)\Big),\qquad x\in\mathbb{R}.
\end{equation}
In particular, it is possible to extend $Q_l^m$ and hence also $G(t,x,\,\cdot\,)$ analytically to the complex domain $\mathbb{C}\setminus i\pi(\mathbb{Z}+\frac{1}{2})$, where only the zeros of the function $\cosh$ were excluded, that is, we consider $G(t,x,z)$ in \eqref{Eq_G_Poeschl_Teller} with $y$ replaced by $z\in\mathbb{C}\setminus i\pi(\mathbb{Z}+\frac{1}{2})$. Moreover, it can easily be checked, that
\begin{equation}\label{Eq_R_derivatives}
\begin{split}
\frac{\partial}{\partial z}R(t,z)&=\frac{z}{2it}R(t,z)-\frac{2}{\sqrt{i\pi t}}\sinh(z), \\
\frac{\partial}{\partial t}R(t,z)&=i\Big(1+\frac{z^2}{4t^2}\Big)R(t,z)+\frac{z\sinh(z)}{t\sqrt{i\pi t}}+\frac{2i\cosh(z)}{\sqrt{i\pi t}}.
\end{split}
\end{equation}
If one uses the derivatives \eqref{Eq_Q_Differential_equation} and \eqref{Eq_R_derivatives} together with the identity
\begin{equation*}
\sum\limits_{m=1}^l\frac{m(l-m)!}{(l+m)!}Q_l^m(z)\sinh\big(m(z-x)\big)Q_l^m(x)=\frac{l(l+1)}{4}\big(\tanh(z)-\tanh(x)\big)
\end{equation*}
for the Legendre polynomials, a straightforward (but long and technical) computation shows that \eqref{Eq_G_Poeschl_Teller} satisfies the Schrödinger equation \eqref{Eq_Schroedinger_G}.

\medskip

The Green's function admits the decomposition \eqref{Eq_G_decomposition} with $a(t)=\frac{1}{4t}$ and
\begin{equation*}
\widetilde{G}(t,x,z)=\frac{1}{2\sqrt{i\pi t}}+\sum\limits_{m=1}^l\frac{m(l-m)!}{2(l+m)!}Q_l^m(x)Q_l^m(z)R\big(m^2t,m(z-x)\big).
\end{equation*}
Now let us assume, that the domain $\Omega$ of Theorem~\ref{aayy} has a positive distance to the poles $i\pi(\mathbb{Z}+\frac{1}{2})$. If not, we can always shrink it, such that it has positive distance to $i\pi(\mathbb{Z}+\frac{1}{2})$ and still contains the double sector $S_\alpha$, e.g. by taking the intersection with
\begin{equation*}
\Omega_\alpha\coloneqq\Set{z\in\mathbb{C} | \vert\Im(z)\vert<\tan(\alpha)|\Re(z)|+\frac{\pi}{4}}.
\end{equation*}
Since the domain $\Omega$ now has positive distance to the zeros of $\cosh(z)$, we can use
\begin{equation*}
|\cosh(z)|^2=\sinh^2(\Re z)+\cos^2(\Im z)\geq c>0,\qquad z\in\Omega,
\end{equation*}
to estimate
\begin{equation}\label{Eq_Q_bound}
|Q_l^m(z)|\leq A_l^me^{B_l^m|z|},\qquad z\in\Omega,
\end{equation}
for some $A_l^m,B_l^m\geq 0$. Due to the properties \cite[Lemma 2.1]{ABCS20} of the function $\Lambda$ we can also estimate \eqref{Eq_R} as
\begin{equation}\label{Eq_R_bound}
|R(t,z)|\leq 2\Lambda\Big(-\frac{\sqrt{t}}{\sqrt{2}}\Big)e^{|\Re(z)|},\qquad t>0,\,z\in\mathbb{C};
\end{equation}
to verify this inequality it suffices to consider $\Re(z)+\Im(z)\geq 0$ due to the symmetry $R(t,-z)=R(t,z)$ and use the estimate as well as the monotonicity of $\Lambda$. Summing up we conclude that for $t>0$, $x\in\mathbb{R}$ and $z\in\Omega$ the function $\widetilde{G}$ can be estimated as
\begin{equation*}
|\widetilde{G}(t,x,z)|\leq\frac{1}{2\sqrt{\pi t}}+\sum\limits_{m=1}^l\frac{m(l-m)!}{(l+m)!}(A_l^m)^2\Lambda\Big(-\frac{m\sqrt{t}}{\sqrt{2}}\Big)e^{(m+B_l^m)(|x|+|z|)},
\end{equation*}
from which the bound \eqref{Eq_G_estimate} follows if we choose the coefficients
\begin{align*}
A_0(t,x)&=\frac{1}{2\sqrt{\pi t}}+\sum\limits_{m=1}^l\frac{m(l-m)!}{(l+m)!}(A_l^m)^2e^{(m+B_l^m)|x|}\Lambda\Big(-\frac{m\sqrt{t}}{\sqrt{2}}\Big), \\
B_0(t,x)&=\max\limits_{1\leq m\leq l}(m+B_l^m).
\end{align*}
Using $\Lambda(0)=1$ it is not difficult to see that these coefficients can be continuously extended as in \eqref{Eq_Coefficient_limits}. For the limit \eqref{Eq_G_initial_condition} we use the asymptotics
\begin{equation*}
R(t,z)=\frac{4\sinh(z)\sqrt{it}}{z\sqrt{\pi}}+\mathcal{O}(t),\quad\text{as }t\rightarrow 0^+,\quad\forall z\in\mathbb{C},
\end{equation*}
which follows from \cite[Lemma 2.1]{ABCS20}. Finally, using the derivative of $\Lambda$ from \cite[Lemma 2.1]{ABCS20} and the fact that $Q_l^m$ contains only
powers of the functions $\cosh$ and $\sinh$, as well as the inequalities \eqref{Eq_Q_bound} and \eqref{Eq_R_bound}, one can
show that also the derivatives of $\widetilde{G}$ are
exponentially bounded as required in \eqref{Eq_G_derivative_estimate}.

\end{document}